\crefname{section}{Section}{Sections}
\crefname{subsection}{\S}{\S\S}
\crefname{subsubsection}{\S}{\S\S}
\theoremstyle{plain}
\newtheorem{lemma}{Lemma}[section]
\newtheorem{proposition}[lemma]{Proposition}
\newtheorem{corollary}[lemma]{Corollary}
\newtheorem{theorem}[lemma]{Theorem}
\theoremstyle{nonumberplain}
\newtheorem{theoremN}{Theorem}
\theoremstyle{plain}
\newtheorem{definition}[lemma]{Definition}
\newtheorem{construction}[lemma]{Construction}
\newtheorem{example}[lemma]{Example}
\newtheorem{remark}[lemma]{Remark}
\newtheorem{remarks}[lemma]{Remarks}
\newtheorem{convention}[lemma]{Convention}
\newtheorem{notation}[lemma]{Notation}
\crefname{definition}{definition}{definitions}
\crefname{construction}{construction}{constructions}
\crefname{ex}{example}{examples}
\crefname{remark}{remark}{remarks}
\crefname{remarks}{remark}{remarks}
\crefname{convention}{convention}{conventions}
\crefname{notation}{notation}{notations}
\crefname{table}{table}{tables}
\crefname{lemma}{lemma}{lemmas}
\crefname{proposition}{proposition}{propositions}
\crefname{corollary}{corollary}{corollaries}
\crefname{theorem}{theorem}{theorems}
\crefname{enumi}{}{}
\crefname{assumption}{assumption}{Assumptions}
\crefname{claim}{claim}{Claims}
\crefname{equation}{}{}
\numberwithin{equation}{section}
\theoremstyle{nonumberplain}
\newtheorem{proof}{Proof}
\newcommand\pf[1]{\newtheorem{#1}{Proof of \Cref{#1}}}
\newcommand\bG{{\mathbb G}}
\newcommand\bZ{{\mathbb Z}}
\newcommand\cA{{\mathcal A}}
\newcommand\cC{{\mathcal C}}
\newcommand\fg{{\mathfrak g}}
\newcommand\fh{{\mathfrak h}}
\newcommand\fk{{\mathfrak k}}
\newcommand\fn{{\mathfrak n}}
\newcommand\fo{{\mathfrak o}}
\newcommand\fy{{\mathfrak y}}
\newcommand\fz{{\mathfrak z}}
\newcommand\fsl{\mathfrak{sl}}
\newcommand\fsp{\mathfrak{sp}}
\DeclareMathOperator{\id}{id}
\DeclareMathOperator{\spn}{\mathrm{span}}
\DeclareMathOperator{\Ann}{\mathrm{Ann}}
\DeclareMathOperator{\End}{\mathrm{End}}
\DeclareMathOperator{\Ind}{\mathrm{Ind}}
\DeclareMathOperator{\tr}{\mathrm{tr}}
\DeclareMathOperator{\Prim}{\mathrm{Prim}}
\newcommand{\cat}[1]{\textsc{#1}}
\newcommand{\qedhere}{\mbox{}\hfill\ensuremath{\blacksquare}}
\title{Lie-algebra centers via de-categorification}
\author{Alexandru Chirvasitu}
\begin{document}

\date{}

\newcommand{\Addresses}{{
  \bigskip
  \footnotesize

  \textsc{Department of Mathematics, University at Buffalo, Buffalo,
    NY 14260-2900, USA}\par\nopagebreak \textit{E-mail address}:
  \texttt{achirvas@buffalo.edu}

}}

\maketitle

\begin{abstract}
  Let $\mathfrak{g}$ be a Lie algebra over an algebraically closed field $\Bbbk$ of characteristic zero. Define the universal grading group $\mathcal{C}(\mathfrak{g})$ as having one generator $g_{\rho}$ for each irreducible $\mathfrak{g}$-representation $\rho$, one relation $g_{\pi} = g_{\rho}^{-1}$ whenever $\pi$ is weakly contained in the dual representation $\rho^*$ (i.e. the kernel of $\pi$ in the enveloping algebra $U(\mathfrak{g})$ contains that of $\rho^*$), and one relation $g_{\rho} = g_{\rho'}g_{\rho''}$ whenever $\rho$ is weakly contained in $\rho'\otimes\rho''$.
  
  The main result is that attaching to an irreducible representation its central character gives an isomorphism between $\mathcal{C}(\mathfrak{g})$ and the dual $\mathfrak{z}^*$ of the center $\mathfrak{z}\le \mathfrak{g}$ when $\mathfrak{g}$ is (a) finite-dimensional solvable; (b) finite-dimensional semisimple. The group $\mathcal{C}(\mathfrak{g})$ is also trivial when the enveloping algebra $U(\mathfrak{g})$ has a faithful irreducible representation (which happens for instance for various infinite-dimensional algebras of interest, such as $\mathfrak{sl}(\infty)$, $\mathfrak{o}(\infty)$ and $\mathfrak{sp}(\infty)$). These are analogues of a result of M\"uger's for compact groups and a number of results by the author on locally compact groups, and provide further evidence for the pervasiveness of such center-reconstruction phenomena.
\end{abstract}

\noindent {\em Key words: Lie algebra; primitive ideal; enveloping algebra; central character; induced representation; solvable; nilpotent; semisimple; Hopf algebra}

\vspace{.5cm}

\noindent{MSC 2020: 17B05; 17B10; 16D60; 16T05}

\tableofcontents

\section*{Introduction}

The initial motivation for the material below is a phenomenon noted in \cite{mug}, which it will be instructive to summarize. Consider a compact group $\bG$, and define its {\it chain group} $\cC(\bG)$ by generators and relations, as follows:
\begin{itemize}
\item there is a generator $g_V$ for every irreducible unitary $\bG$-representation $V$;
\item and a relation $g_U = g_V g_W$ whenever $U$ is contained as a summand in $V\otimes W$.
\end{itemize}
The main result of \cite{mug} (namely \cite[Theorem 3.1]{mug}) says that assigning to an irreducible representation its central character implements an isomorphism of $\cC(\bG)$ onto the discrete abelian group $\widehat{Z(\bG)}$, i.e. the Pontryagin dual of the compact (abelian) center $Z(\bG)\le \bG$.

In other words, the dual center $\widehat{Z(\bG)}$ can be recovered from the category of $\bG$-representations by a process of {\it de-categorification} (hence this paper's title):
\begin{itemize}
\item objects (i.e. $\bG$-representations) are demoted to elements (of $\cC(\bG)$);
\item the tensor product bifunctor becomes multiplication;
\item and for good measure, dualization in the category of representations corresponds to taking inverses in $\cC(\bG)$: it turns out that the element $g_{V^*}$ corresponding to the dual (or contragredient) representation $V^*$ automatically equals $g_V^{-1}$. 
\end{itemize}
Note also the general ``Tannakian'' \cite{svdr,sch-tnk,dr-tnk,wor-tnk} flavor about the discussion: recovering structure from monoidal categorical data.

The theme is taken up in \cite{chi-cc} in the context of {\it locally} compact groups, where a chain group can be defined analogously (generators again given by irreducible unitary representations), with only the obvious sensible modifications: one imposes a relation $g_U = g_V g_W$ whenever $U$ is {\it weakly} contained in $V\otimes W$ in the sense of \cite[Definition F.1.1]{bdv} (actual containment would be too much to ask for).

The phenomenon turns out to be remarkably robust: one again has isomorphisms $\cC(\bG)\cong \widehat{Z(\bG)}$ of (this time topological) groups for broad interesting classes of locally compact groups $\bG$: discrete countable with infinite conjugacy classes, connected nilpotent Lie groups, connected semisimple Lie groups, etc.

The present iteration of the project investigates the natural purely algebraic analogues of the above-mentioned objects, constructions and results. Much of the discussion makes sense for Hopf algebras (in place of groups), and we do give definitions in that generality (\Cref{def:chainlie}), but the substance of the paper mostly concerns Lie algebras. Hence:

\begin{definition}\label{def:chainintro}
  Let $\fg$ be a Lie algebra over a field $\Bbbk$. The {\it chain group} $\cC(\fg)$ is defined by
  \begin{itemize}
  \item generators $g_{\rho}$ for irreducible $\fg$-representations $\rho$;
  \item and relations $g_{\rho} = g_{\rho'}g_{\rho''}$ whenever $\rho$ is {\it weakly contained} in $\rho'\otimes \rho''$ (\Cref{def:wcontlie}): the kernel of $\rho$, regarded as a morphism from the enveloping algebra $U(\fg)$, contains that of $\rho'\otimes\rho''$;
  \item together with relations $g_{\pi} = g_{\rho}^{-1}$ whenever $\pi$ is weakly contained in the contragredient representation $\rho^*$.
  \end{itemize}
\end{definition}

One might hope, by analogy to everything recalled above, that
\begin{itemize}
\item irreducible $\fg$-representations $\rho$ admit central characters, and in particular give functionals $\fz(\fg)\to \Bbbk$, where $\fz(\fg)\le \fg$ is the center of the Lie algebra (this is frequently the case, e.g. for finite-dimensional Lie algebras over algebraically closed fields of characteristic zero \cite[Proposition 2.6.8]{dixenv});
\item and that this then gives an isomorphism $\cC(\fg)\cong \fz(\fg)^*$, so that again, the dual center $\fz(\fg)^*$ is a de-categorification of the category of $\fg$-representations.
\end{itemize}
The main results of the paper confirm that this is holds in all cases I have been able to check (i.e. I do not know of any Lie algebras for which it is {\it not} true). Summarizing \Cref{pr:faiths}, \Cref{cor:infrk} and \Cref{th:solv,th:ss}:

\begin{theoremN}
  Let $\fg$ be a Lie algebra over an algebraically closed field $\Bbbk$ of characteristic zero and $\fz\le \fg$ its center.
  \begin{enumerate}[(a)]
  \item Associating to an irreducible representation its central character provides an isomorphism $\cC(\fg)\cong \fz^*$ if $\fg$ is finite-dimensional and either solvable or semisimple.
  \item Furthermore, $\cC(\fg)$ is trivial if the enveloping algebra $U(\fg)$ has a faithful simple module.
  \item So as a particular case of the previous item, we again have an isomorphism $\cC(\fg)\cong \fz^*$ (both sides being trivial) when $\fg$ is one of the infinite-rank complex Lie algebras $\fsl(\infty)$, $\fo(\infty)$ or $\fsp(\infty)$.  \qedhere
  \end{enumerate}
\end{theoremN}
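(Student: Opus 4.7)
The plan is to construct the central-character map $\chi:\cC(\fg)\to\fz^*$ sending $g_\rho$ to $\chi_\rho$ and prove it is an isomorphism. First I would verify well-definedness. Every simple representation has a central character $\chi_\rho\in\fz^*$ by the Dixmier result cited in the introduction, and since each $z\in\fz$ is primitive in $U(\fg)$ (so $\Delta(z)=z\otimes 1+1\otimes z$ and $S(z)=-z$), central characters are additive under tensor products and negate under duals. If $\rho$ is weakly contained in $\rho'\otimes\rho''$, then $z-\chi_{\rho'}(z)-\chi_{\rho''}(z)\in\Ann(\rho'\otimes\rho'')\subseteq\Ann\rho$ forces $\chi_\rho=\chi_{\rho'}+\chi_{\rho''}$, with a parallel argument for the contragredient relation.

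For parts (b) and (c), let $M$ be a faithful simple $U(\fg)$-module. The Hopf-algebraic identity $\Ann(M\otimes N)=\Delta^{-1}\!\big(\Ann M\otimes U+U\otimes\Ann N\big)$, combined with $\Ann M=0$ and the counit identity $(\epsilon\otimes\id)\circ\Delta=\id$, gives $\Ann(M\otimes N)\subseteq\Ann N$. Specializing to $N=M$ yields $\Ann(M\otimes M)=0$, so every simple $\rho$ satisfies $\Ann\rho\supseteq 0=\Ann(M\otimes M)$, i.e.\ $\rho$ is weakly contained in $M\otimes M$. Hence $g_\rho=g_M^2$ in $\cC(\fg)$; taking $\rho=M$ gives $g_M=g_M^2$, so $g_M=1$ and every $g_\rho=1$. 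For (c) one separately exhibits faithful simple $U(\fg)$-modules for $\fsl(\infty)$, $\fo(\infty)$, $\fsp(\infty)$, presumably via their natural/tautological representations or closely related constructions.

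For part (a), observe that (b) does \emph{not} apply to finite-dimensional semisimple $\fg$: the Harish-Chandra center of $U(\fg)$ acts by a nonzero scalar on every simple module, so $\Ann\rho\neq 0$. Here $\fz=0$ and the goal is $\cC(\fg)=1$, which I would establish by combining Clebsch--Gordan relations among finite-dimensional irreps (yielding for instance $g_Vg_{V^*}=1$ and $g_{V_{2n}}=g_{V_n}^2$) with tensor-product decompositions $M(\lambda)\otimes V\cong\bigoplus_i M(\mu_i)$ of generic Verma modules against finite-dimensional irreps, the latter supplying the asymmetric relations needed to force $g_V=1$ for every $V$. For finite-dimensional solvable $\fg$, surjectivity of $\chi$ follows from Dixmier's polarization-and-induction construction of a simple module with prescribed central character $\lambda\in\fz^*$, while injectivity I would approach by induction on $\dim\fg$ via a one-dimensional central ideal, using the action of characters in $(\fg/[\fg,\fg])^*$ on simples together with the orbit-method description of $\Prim U(\fg)$.

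The main obstacle is injectivity of $\chi$: for two irreps $\rho,\rho'$ sharing a central character one must exhibit an explicit sequence of tensor-product/contragredient relations equating $g_\rho$ and $g_{\rho'}$ in $\cC(\fg)$. In the solvable case this reduces to understanding how primitive ideals of $U(\fg)$ vary over coadjoint orbits of fixed central character; in the semisimple case it requires exploiting the Verma-module asymmetry to collapse distinct finite-dimensional irreps modulo their common (trivial) central character. Both reductions rely essentially on detailed knowledge of $\Prim U(\fg)$, which is the real content hidden behind the deceptively clean statement of the theorem.
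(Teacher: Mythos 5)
Your treatment of parts (b) and (c) is correct and in fact a little slicker than the paper's: you show directly that $M\otimes M$ is faithful via the wedge-product identity and the counit, so every irreducible $\rho$ satisfies $\rho\preceq M\otimes M$, giving $g_\rho=g_M^2$ and hence $g_M=1$ after setting $\rho=M$. The paper instead proves that $J\le J'$ forces $g_J=g_{J'}$ in $\cC(H)$ (by tensoring $\rho\preceq\rho'$ with $\rho'$ and cancelling in the group) and then observes that $\{0\}$ is a primitive ideal contained in all others; the two arguments are close cousins, and yours is fine. Your verification that the central-character map is a well-defined homomorphism is also correct.

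Part (a), however, is where essentially all of the content lies, and there you have only a plan, not a proof --- as you yourself concede in your final paragraph. Two concrete gaps. In the semisimple case, Clebsch--Gordan relations among finite-dimensional irreducibles can never yield triviality on their own (they would only give the M\"uger-type answer, e.g.\ $\bZ/2$ for $\fsl_2$), and your sketch of ``forcing $g_V=1$ for every $V$'' leaves unaddressed the fact that the chain group has a generator for \emph{every} primitive ideal of $U(\fg)$, most of which do not come from finite-dimensional representations. The paper's proof handles this by invoking Duflo's theorem that the annihilators $J_\lambda$ of Verma modules are exactly the \emph{minimal} primitive ideals (so every generator equals some $g_{J_\lambda}$), and then extracts from $M(\lambda)\otimes M(\lambda')$ the $\delta$-shifted additivity $g_{\lambda}g_{\lambda'}=g_{\lambda+\lambda'-\delta}$, which combined with Weyl invariance and $w_0\delta=-\delta$ shows the assignment factors through the Weyl coinvariants of $\fh^*$, a trivial group. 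None of these specific relations appear in your outline, and without the shift by $\delta$ (the genuine asymmetry) the argument cannot close. In the solvable case, surjectivity is indeed easy, but the injectivity induction you gesture at requires (i) an additivity statement $\ker(\rho_f\otimes\rho_{f'})\le\ker(\rho_{f+f'}\otimes\lambda)$ for the Dixmier map $f\mapsto I(f)$, valid only up to twisting by characters $\lambda$ killing the nilradical, and (ii) a dimension-induction endgame showing every such $\lambda$ is trivial in $\cC(\fg)$ --- in the paper this splits into a case where a two-dimensional non-nilpotent quotient appears (so part (b) applies to that quotient) and a case that contradicts $\lambda\in\fn^\perp$. These are the real steps, and your proposal does not supply them.
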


\subsection*{Acknowledgements}

This work is partially supported by NSF grant DMS-2001128

Exchanges with M. Lorenz and I. Penkov on the present material and related matters has been very helpful and inspiring. 

\section{Preliminaries}\label{se.prel}

We make occasional reference to general ring-theoretic material for which \cite{lam,lam-lect}, say, are good reference. Coalgebras, bialgebras and Hopf algebras (over fields) will also feature sporadically; \cite{swe,mont,rad} all provide good background, which we reference with more specificity where appropriate. All rings are assumed unital.

The phrase `representation' will be employed as a synonym for `left module', appropriately linear when working over a field. Additionally, modules are left unless specified otherwise.

Recall that a proper two-sided ideal $I\trianglelefteq A$ of a ring is
\begin{itemize}
\item {\it primitive} (e.g. \cite[Definitions 11.2 and 11.3]{lam}, \cite[\S 3.1.4]{dixenv}) if it is the kernel of an irreducible $A$-representation.
\item {\it prime} (\cite[Definition 10.1]{lam}, \cite[\S 3.1.1]{dixenv}) if for ideals $J_i$, $i=1,2$ with $J_1 J_2\le I$, $I$ must contain one of the $J_i$.
\item and {\it semi-prime} (\cite[Definition 10.8]{lam}, \cite[\S 3.1.3]{dixenv}) if for any ideal $J$ with $J^2\le I$ we have $J\le I$.
\end{itemize}

We apply the term `primitive' universally, whether in the purely algebraic or analytic setting (where algebras are $C^*$, ideals are closed, representations are on Hilbert spaces with the appropriate notion of irreducibility, etc.).

The following construction features prominently in the discussion below.

\begin{definition}\label{def:cmon}
  Consider a set $(S,\triangleleft)$ equipped with a ternary relation, written
  \begin{equation*}
    s\triangleleft (s',s'').
  \end{equation*}
  \begin{itemize}
  \item The {\it chain semigroup} $\cC=\cC(S,\triangleleft)$ associated to `$\triangleleft$' is defined as having one generator $g_s$ for each $s\in S$ and relations
    \begin{equation*}
      g_s = g_{s'}g_{s''}
      \quad\text{whenever}\quad
      s\triangleleft (s',s''). 
    \end{equation*}
    It will occasionally be convenient to enrich the structure of $\cC$ as follows.
  \item If in addition $S$ is equipped with a distinguished element $s_0$, the {\it chain monoid} $\cC=\cC(S,\triangleleft,s_0)$ is defined as above, with the additional constraint that $g_{s_0}$ be the trivial element of the monoid.
  \item Finally, if $S$ is also equipped with a binary relation `$\sim$', the {\it chain group} $\cC=\cC(S,\triangleleft,s_0,\sim)$ is the chain monoid, with the additional constraint that
    \begin{equation*}
      g_{s'} = g_{s}^{-1}
      \quad\text{whenever}\quad
      s\sim s'.
    \end{equation*}
  \end{itemize}
\end{definition}

\section{Chain groups and center reconstruction}\label{se:cg}

We will soon specialize the discussion to Lie algebras, but some of it goes through more generally. $\Bbbk$ always denotes a field that algebras, Hopf algebras, Lie algebras and so on are understood to be linear over. Additional assumptions will be in force throughout most of the paper, but the reader will be warned when they come into effect.

\subsection{Generalities on weak containment for Hopf and Lie algebras}\label{subse:wc}

For an arbitrary ring $A$, $\Prim(A)$ is its space of primitive ideals It can be equipped with the familiar {\it Jacobson topology} (\cite[\S 3.2.2]{dixenv} or \cite[\S 7.1.3]{muss}): for an ideal $I\trianglelefteq A$, set
\begin{equation*}
  V(I):=\{P\in \Prim(A)\ |\ I\le P\}.
\end{equation*}
The $V(I)$ are precisely the closed sets of the topology. For a Lie algebra $\fg$ with enveloping algebra $U=U(\fg)$ the notation $\Prim(\fg):=\Prim(U)$ is an alternative.

We also borrow the usual language of {\it weak containment} from functional analysis \cite[\S 3.4.5]{dixc}; 

\begin{definition}\label{def:wcontlie}
  Let $\pi$ and $\rho$ two representations of a ring $A$.
  \begin{itemize}
  \item We say that $\pi$ is {\it weakly contained} in $\rho$ (written $\pi\preceq \rho$) if $\ker\pi\supseteq \ker\rho$.
  \item $\pi$ and $\rho$ are {\it weakly equivalent} (written $\pi\approx \rho$) if each of them weakly contains the other. 
  \end{itemize}
  The terms apply to Lie-algebra representations, where $A$ is taken to be the enveloping algebra.

  When convenient, we might also commit mild notational abuse in writing $M\preceq N$ if $M$ and $N$ are the left $A$-modules carrying the two respective representations.
\end{definition}

\begin{remark}\label{re:weqsimples}
  When the kernel of a representation $\rho:A\to \End(V)$ is an intersection of primitive ideals, we have a weak equivalence
  \begin{equation*}
    \rho\approx \bigoplus \pi,\ \text{irreducible }\pi\preceq \rho.
  \end{equation*}
  When $A=U(\fg)$ is the envelope of a finite-dimensional Lie algebra, this is the case precisely if $\ker\rho$ is semi-prime \cite[Proposition 3.1.15]{dixenv}.
\end{remark}

There are alternative characterizations of weak containment, parallel to their analytic counterparts (\cite[Theorem 3.4.4]{dixc}, \cite[Theorem F.4.4]{bdv}). First, we need

\begin{definition}\label{def:wast}
  Let $V$ be a $\Bbbk$-vector space. The {\it weak$^*$ topology} on $V^*$ is the weakest topology making all maps
  \begin{equation*}
    V^*\ni f\mapsto f(v)\in \Bbbk,\ v\in V
  \end{equation*}
  continuous (with $\Bbbk$ topologized discretely).
\end{definition}

There is also the following notion (following \cite[\S 2.7.8]{dixenv}, for instance).

\begin{definition}\label{def:coeff}
  Let $\rho:A\to \End(V)$ be a representation of a $\Bbbk$-algebra on a vector space. The space $MC(\rho)$ of {\it matrix coefficients} (or just plain `coefficients') of $\rho$ is
  \begin{equation*}
    MC(\rho):=\spn\{f(\rho(\cdot)v)\ |\ v\in V,\ f\in V^*\}\le A^*. 
  \end{equation*}  
\end{definition}

We follow standard practice (e.g. \cite[\S 1.2.20]{dixenv}) in denoting, with a `$\perp$' superscript, annihilators of vector spaces with respect to a pairing/bilinear form. Specifically, if
\begin{equation*}
  W\otimes V\xrightarrow[]{\quad b\quad} \Bbbk
\end{equation*}
is such a pairing and $V_0\le V$, 
\begin{equation*}
  V_0^{\perp}:=\{w\in W\ |\ b(w,V_0)=\{0\}\}.
\end{equation*}
The pairing will always be understood, and in fact it will typically be the standard one between a vector space $V$ and its full dual $V^*$.

We now have the following simple analogue of \cite[Theorem 3.4.4]{dixc}. 

\begin{lemma}\label{le:wastdense}
  For representations $\rho_i:A\to \End(V_i)$, $i=1,2$ of an algebra, the following conditions are equivalent.
  \begin{enumerate}[(a)]
  \item\label{item:4} $\rho_1\preceq\rho_2$ in the sense of \Cref{def:wcontlie}.
  \item\label{item:5} We have the inclusion
    \begin{equation*}
      \overline{MC(\rho_1)}\le \overline{MC(\rho_2)}
    \end{equation*}
    in $A^*$, with the bar denoting the weak$^*$ closure and $MC$ as in \Cref{def:coeff}.
  \item\label{item:6} Similarly, we have the inclusion
    \begin{equation*}
      MC(\rho_2)^{\perp} \le MC(\rho_1)^{\perp}
    \end{equation*}
    in $A$. 
  \end{enumerate}
\end{lemma}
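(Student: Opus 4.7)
The plan is to observe that condition (a) and condition (c) are essentially tautologies once one identifies $MC(\rho)^{\perp}$ with $\ker\rho$, and that the equivalence of (b) and (c) is a general biannihilator/weak$^*$-closure statement in linear duality.

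\medskip

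\noindent\textbf{Step 1: identify $MC(\rho)^{\perp}$ with $\ker\rho$.} For a representation $\rho:A\to\End(V)$, I would show directly that
\begin{equation*}
    MC(\rho)^{\perp} \;=\; \{a\in A\ |\ f(\rho(a)v)=0\ \forall v\in V,\ f\in V^*\} \;=\; \ker\rho.
\end{equation*}
The inclusion ``$\supseteq$'' is immediate: if $\rho(a)=0$, then every coefficient vanishes at $a$. The reverse inclusion uses only that $V^*$ separates points of $V$ (which holds for any $\Bbbk$-vector space), so $f(\rho(a)v)=0$ for all $f$ forces $\rho(a)v=0$ for each $v$, hence $\rho(a)=0$. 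This identification immediately gives the equivalence \ref{item:4}$\Leftrightarrow$\ref{item:6}, since \ref{item:6} just reads $\ker\rho_2 \le \ker\rho_1$.

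\medskip

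\noindent\textbf{Step 2: weak$^*$ biannihilator.} For the equivalence \ref{item:5}$\Leftrightarrow$\ref{item:6}, I would invoke (or briefly reprove) the standard fact that for any subspace $X\le A^*$ the weak$^*$ closure coincides with the biannihilator:
\begin{equation*}
    \overline{X}^{\,w^*} \;=\; (X^{\perp})^{\perp},
\end{equation*}
where the inner $\perp$ is taken inside $A$ and the outer inside $A^*$. The inclusion ``$\subseteq$'' is automatic because $(X^\perp)^\perp$ is weak$^*$-closed and contains $X$; the reverse inclusion follows from a Hahn--Banach-style separation for the weak$^*$ topology (any $f\in A^*\setminus \overline{X}^{w^*}$ is separated from $\overline{X}^{w^*}$ by a weak$^*$-continuous functional, i.e.\ by evaluation at some $a\in A$, which then lies in $X^\perp$ but pairs nontrivially with $f$).

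\medskip

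\noindent\textbf{Step 3: combine.} Noting that $X^{\perp} = \overline{X}^{\,w^*,\perp}$ in general, the chain
\begin{equation*}
    \overline{MC(\rho_1)} \le \overline{MC(\rho_2)}
    \;\Longleftrightarrow\;
    MC(\rho_2)^{\perp} \le MC(\rho_1)^{\perp}
\end{equation*}
is obtained by applying $\perp$ (which reverses inclusions and kills closures) in one direction, and by applying the biannihilator identity of Step 2 in the other. Combined with Step 1, this closes the triangle \ref{item:4}$\Leftrightarrow$\ref{item:5}$\Leftrightarrow$\ref{item:6}.

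\medskip

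\noindent I do not anticipate a substantive obstacle: the only mildly delicate point is the biannihilator step, which for the discrete target field $\Bbbk$ reduces to a linear-algebra separation argument (every weak$^*$-continuous linear functional on $A^*$ is evaluation at some $a\in A$), so no analytic subtleties intervene.
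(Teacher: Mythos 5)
Your proof is correct and takes essentially the same route as the paper's: the paper's two ``central observations'' are exactly your Step 2 (the weak$^*$-closure-equals-biannihilator identity) and your Step 1, the latter phrased there as weak$^*$-density of $MC(\id)$ in $\End(V)^*$, which comes down to the same separation-of-points fact you use to get $MC(\rho)^{\perp}=\ker\rho$. No gaps.
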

\begin{proof}
  The central observations are:
  \begin{enumerate}[(I)]
  \item\label{item:2} For any vector space $V$, if
    \begin{equation*}
      \id:\End(V)\to \End(V)
    \end{equation*}
    is the standard representation of its endomorphism algebra, $MC(\id)$ is weak$^*$-dense in $\End(V)^*$.
  \item\label{item:3} For any $W\le V^*$, the weak$^*$ closure of $W$ is nothing but the annihilator of $W^{\perp}\le V$, i.e. $W^{\perp\perp}$. 
  \end{enumerate}
  This latter remark immediately implies the equivalence of \Cref{item:5} and \Cref{item:6}, while \Cref{item:2} shows that \Cref{item:6} reads
  \begin{equation*}
    \ker\rho_2\le \ker\rho_1
  \end{equation*}
  and hence is equivalent to \Cref{item:4}. 
\end{proof}

We use the language of {\it induced representations} of \cite[Chapter 5]{dixenv}.

\begin{notation}\label{not:induced}
  Let $\fh\le \fg$ be an inclusion of Lie algebras, and denote the enveloping-algebra construction by $U(\cdot)$.
  \begin{enumerate}[(a)]
  \item For a representation $\rho:U(\fh)\to \End(W)$ the corresponding {\it induced representation}
    \begin{equation*}
      \Ind(\rho)
      \quad\text{or}\quad
      \Ind^{\fg}(\rho)
      \quad\text{or}\quad
      \Ind_{\fh}^{\fg}(\rho)
    \end{equation*}
    is $U(\fg)\otimes_{U(\fh)}W$, with its obvious left $U(\fg)$-module structure.
  \item Assume now that $\fg$ is finite-dimensional.
    \begin{itemize}
    \item In general, for an inclusion $F\le E$ of finite-dimensional vector spaces left invariant by an operator $T\in\End(E)$, write $\tr_{E/F}(T)$ for the trace of the operator induced by $T$ on the quotient space $E/F$.
    \item For $x\in \fh$ set
      \begin{equation*}
        \theta_{\fg,\fh}(x):=\frac 12 \tr_{\fg/\fh}ad(x),
      \end{equation*}
      where $ad:\fg\to \End(\fg)$ is the adjoint representation.
    \item For a representation $\rho:\fh\to \End(V)$, its {\it twist} $\widetilde{\rho}$ is defined by
      \begin{equation*}
        \fh\ni x\xrightarrow[]{\quad \widetilde{\rho}\quad}\rho(x)+\theta_{\fg,\fh}(x)\id \in \End(V).
      \end{equation*}
      Regarding the functional $\theta_{\fg,\fh}\in \fh^*$ as a 1-dimensional $\fh$-representation (which it is, since it vanishes on $[\fh,\fh]$), we also have
      \begin{equation*}
        \widetilde{\rho}\cong \rho\otimes \theta_{\fg,\fh}. 
      \end{equation*}
    \item Finally, {\it twisted induction} $\widetilde{\Ind}$ is defined by
      \begin{equation*}
        \widetilde{\Ind}_{\fh}^{\fg} (\rho)
        :=
        \Ind_{\fh}^{\fg} (\widetilde{\rho}).
      \end{equation*}
    \end{itemize}
  \end{enumerate}
\end{notation}

\begin{remark}\label{re:0twist}
  When $\fg$ is nilpotent all traces making a difference between induction and twisted induction in \Cref{not:induced} vanish, so in that case there is no distinction between $\Ind$ and $\widetilde{\Ind}$. 
\end{remark}

As in the analytic case, the familiar operations of tensoring, induction, etc. respect weak containment (see e.g. \cite[\S F.3]{bdv} for the versions pertaining to locally compact groups).

\begin{proposition}\label{pr:opscont}
  The weak containment relation $\preceq$ of \Cref{def:wcontlie} is compatible with the following operations.
  \begin{enumerate}[(a)]
  \item\label{item:7} Direct sums, in the sense that if $\pi_i\preceq \rho_i$ for a family $i\in I$ of representations of a ring $A$, we also have
    \begin{equation*}
      \bigoplus_I \pi_i\preceq \bigoplus_I \rho_i.
    \end{equation*}
  \item\label{item:8} Tensor products, for bialgebras over fields: 
    \begin{equation*}
      \pi_i\preceq \rho_i,\ i=1,2
      \Rightarrow
      \pi_1\otimes \pi_2\preceq \rho_1\otimes\rho_2
    \end{equation*}
    for representations $\pi_i$ and $\rho_i$ of a bialgebra $H$ over any field.
  \item\label{item:19} Scalar extension (or induction), for free ring extensions: suppose $A\to B$ is a ring embedding with $B/A$ free as a right $A$-module. If $M$ and $N$ are two left $A$-modules,
    \begin{equation*}
      M\preceq N
      \Longrightarrow
      B\otimes_A M \preceq B\otimes_A N. 
    \end{equation*}
  \item\label{item:9} Induction, for Lie algebras: if $\fh\le \fg$ is an inclusion of Lie algebras and $\pi\preceq \rho$ are $\fh$-representations, then
    \begin{equation*}
      \Ind^{\fg}(\pi)\preceq \Ind^{\fg}(\rho). 
    \end{equation*}
  \item\label{item:10} Twisted induction: same as in \Cref{item:9}, but with $\widetilde{\Ind}$ in place of $\Ind$ (assuming $\fg$ is finite-dimensional).
  \end{enumerate}
\end{proposition}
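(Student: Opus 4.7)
The plan is to argue each item at the level of kernels in the relevant algebra, trading Lemma~\ref{le:wastdense} for direct manipulation where convenient; the unifying theme is that each operation is monotone in $\ker(\cdot)$.

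Parts \cref{item:7} and \cref{item:8} are short. A direct sum has kernel the intersection of the individual kernels, and intersection is monotone, giving \cref{item:7} at once. For a tensor product over a bialgebra, I would view $\pi_1\otimes\pi_2$ as the outer tensor product $\pi_1\boxtimes\pi_2$ on $H\otimes H$ composed with $\Delta$, which yields
\[
  \ker(\pi_1\otimes\pi_2) \;=\; \Delta^{-1}\bigl(\ker\pi_1\otimes H + H\otimes\ker\pi_2\bigr),
\]
manifestly monotone in each $\ker\pi_i$. The identity rests on the standard embedding $\End(V_1)\otimes\End(V_2)\hookrightarrow\End(V_1\otimes V_2)$ over a field, which makes the second factor of the composite $H\otimes H\to \End(V_1)\otimes\End(V_2)\to\End(V_1\otimes V_2)$ injective, so that the kernel of the composite equals the kernel of the first arrow, i.e.\ $\ker\pi_1\otimes H+H\otimes\ker\pi_2$.

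For the scalar-extension statement \cref{item:19}, I would pick a right $A$-basis $\{e_j\}_{j\in J}$ of $B$, so that $B\otimes_A M=\bigoplus_j e_j\otimes M$ as abelian groups. Writing $be_j=\sum_k e_k\,a_{k,j}(b)$ with $a_{k,j}(b)\in A$, one computes $b\cdot(e_j\otimes m)=\sum_k e_k\otimes a_{k,j}(b)\,m$; hence $b\in\Ann_B(B\otimes_A M)$ iff every entry $a_{k,j}(b)$ lies in $\Ann_A(M)$. This condition is monotone in $\Ann_A(M)$, delivering the claim. Part \cref{item:9} is then \cref{item:19} applied to the PBW-free extension $U(\fh)\hookrightarrow U(\fg)$.

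For \cref{item:10}, I would observe that the twist is implemented by the algebra automorphism $\alpha$ of $U(\fh)$ sending $x\in\fh$ to $x+\theta_{\fg,\fh}(x)$ (well-defined because $\theta_{\fg,\fh}$ kills $[\fh,\fh]$): concretely $\widetilde{\rho}=\rho\circ\alpha$, so $\ker\widetilde{\rho}=\alpha^{-1}(\ker\rho)$. Hence $\pi\preceq\rho$ transports to $\widetilde\pi\preceq\widetilde\rho$, and \cref{item:9} closes the argument. The only step requiring any real care is \cref{item:19}; the rest is bookkeeping with kernels, coproducts, and PBW.
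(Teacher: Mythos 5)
Your argument is correct throughout, and in two places it takes a genuinely different route from the paper. For \cref{item:8} you are in substance doing what the paper does: the formula $\ker(\pi_1\otimes\pi_2)=\Delta^{-1}\bigl(\ker\pi_1\otimes H+H\otimes\ker\pi_2\bigr)$ is exactly the wedge product $\ker\pi_1\wedge\ker\pi_2$ that the paper invokes from Sweedler/Montgomery, and your justification via the injectivity of $\End(V_1)\otimes\End(V_2)\hookrightarrow\End(V_1\otimes V_2)$ (valid for arbitrary vector spaces over a field) is a clean way to unpack it. For \cref{item:19} the paper instead identifies $\Ann_B(B\otimes_AM)$ abstractly as the largest two-sided ideal contained in $B\cdot\Ann_A(M)$, following Dixmier's Proposition 5.1.7; your explicit matrix computation $b\cdot(e_j\otimes m)=\sum_k e_k\otimes a_{k,j}(b)m$ gives a sharper, fully concrete description of the annihilator from which monotonicity is immediate --- arguably more self-contained, at the cost of choosing a basis. (Minor point: the hypothesis is that $B/A$ is free, not $B$; but this forces the right-$A$-module sequence $0\to A\to B\to B/A\to 0$ to split, so $B$ is indeed free and your basis exists.) For \cref{item:10} the paper reduces to \cref{item:8} and \cref{item:9} by writing $\widetilde{\Ind}(\rho)=\Ind(\rho\otimes\theta_{\fg,\fh})$, whereas you transport kernels along the algebra automorphism $\alpha$ of $U(\fh)$ induced by $x\mapsto x+\theta_{\fg,\fh}(x)$; this is a valid alternative (the automorphism exists precisely because $\theta_{\fg,\fh}$ kills $[\fh,\fh]$) and has the small advantage of not needing the Hopf/tensor machinery of \cref{item:8} for this step.
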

\begin{proof}
  Considering the claims in the stated order:

  \begin{enumerate}[(a)]
  \item follows from the fact that
    \begin{equation*}
      \ker\left(\bigoplus_I \pi_i\right) = \bigcap_I \ker\pi_i
    \end{equation*}
    and similarly for the $\rho_i$, so if $\ker\pi_i\ge \ker\rho_i$ the same goes for $I$-fold intersections.
  \item Consider a bialgebra $H$ as in the statement. We then have
    \begin{equation*}
      \ker(\pi_1\otimes\pi_2) = \ker\pi_1\wedge \ker\pi_2,
    \end{equation*}
    where
    \begin{equation*}
      V\wedge W:=\ker\left(H\xrightarrow[]{\quad\Delta\quad} H\otimes H\xrightarrow[]{} (H/V)\otimes (H/W)\right)
    \end{equation*}
    is the {\it wedge product} of two subspaces $V,W\le H$ (\cite[p.179]{swe} or \cite[proof of Theorem 5.2.2]{mont}). That ``wedging'' respects inclusion is elementary, hence the conclusion.
  \item Let $J_M$ and $J_N$ be the respective kernels of the module-structure maps
    \begin{equation*}
      A\to \End(M)
      \quad\text{and}\quad
      A\to \End(N),
    \end{equation*}
    the freeness assumption ensures that the proof of \cite[Proposition 5.1.7 (i)]{dixenv} replicates to show that
    \begin{equation*}
      \Ann_{B}(M) = BJ_M
    \end{equation*}
    and similarly for $N$, where
    \begin{equation*}
      M\le B\otimes_AM,\ N\le B\otimes_A N
    \end{equation*}
    via the canonical maps. But as in \cite[Proposition 5.1.7 (ii)]{dixenv}, the annihilators of $M\le B\otimes_AM$ in $B$ are the largest two-sided ideals contained in
    \begin{equation*}
      \Ann_{B}(M) = BJ_M
      \quad\text{and}\quad
      \Ann_{B}(N) = BJ_N:
    \end{equation*}
    this is an instance of the general remark that for any set $S$ of generators for a left $R$-module $V$, the annihilator of $V$ is the largest bilateral ideal contained in the annihilator of $S$.
    
    The assumption
    \begin{equation*}
      J_M\ge J_N\Longrightarrow BJ_M\ge BJ_N
    \end{equation*}
    then implies the same ordering between the largest 2-sided ideals respectively contained in the two, and we are done. 
    
  \item is a consequence of \Cref{item:19}, applied to the ring inclusion $U(\fh)\le U(\fg)$, the freeness assumption being a consequence of the {\it Poincar\'e-Birkhoff-Witt theorem} (\cite[Theorem 2.1.11]{dixenv} or \cite[Theorem 6.1.1]{muss}).
  \item follows from points \Cref{item:8} and \Cref{item:9}, given the definition
    \begin{equation*}
      \widetilde{\Ind}^{\fg}(\pi) = \Ind^{\fg}(\pi\otimes\theta_{\fg,\fh})
    \end{equation*}
    of \Cref{not:induced} and its $\rho$ analogue.
  \end{enumerate}

  This concludes the proof.
\end{proof}

\begin{remarks}
  \begin{enumerate}[(1)]
  \item An alternative proof of \Cref{pr:opscont} \Cref{item:8} could have used \Cref{le:wastdense}: we are assuming
    \begin{equation*}
      \overline{MC(\pi_i)}\le \overline{MC(\rho_i)},
    \end{equation*}
    whence also
    \begin{equation*}
      \overline{MC(\pi_1)\otimes MC(\pi_2)}\le \overline{MC(\rho_1)\otimes MC(\rho_2)}. 
    \end{equation*}
    The conclusion then follows from the fact that in general, $V^*\otimes W^*$ is weak$^*$-dense in $(V\otimes W)^*$.
  \item In reference to \Cref{pr:opscont} \Cref{item:19}, the issue of weak-containment permanence under scalar extension is a bit delicate. On the one hand that statement made a fairly strong freeness assumption. On the other hand though, even {\it faithful flatness} \cite[\S 4I]{lam-lect} of $B$ as a right $A$-module would not quite have been sufficient, as \Cref{ex:2prime} shows. 
  \end{enumerate}
\end{remarks}

\begin{example}\label{ex:2prime}
  Let $p\ne q$ be two prime numbers, and denote by $A$ the {\it localization} \cite[Chapter 3]{am} of $\bZ$ away from the prime ideals $(p)$ and $(q)$: the ring obtained from $\bZ$ by inverting all primes distinct from $p$ and $q$.

  We consider two $A$ modules:
  \begin{equation*}
    {}_pM:=\bigoplus_{n} A/p^nA
  \end{equation*}
  and similarly for ${}_qM$ (with $q$ in place of $p$). Both are faithful, in the sense that their annihilators are trivial. In particular, ${}_pM\approx {}_qM$ (\Cref{def:wcontlie}).

  The ring extension $A\to B$ will now be the {\it $(pq)$-adic completion} of \cite[discussion following Proposition 10.5]{am}:
  \begin{equation*}
    B:=\varprojlim_n A/(pq)^n A. 
  \end{equation*}
  That $A\to B$ is faithfully flat follows, say, from \cite[Chapter 10, Exercise 7]{am}. But $B$ is easily computed to be the product
  \begin{equation*}
    B\cong \bZ_p\times \bZ_q
  \end{equation*}
  of the rings of {\it $p$-adic} and {\it $q$-adic integers} (\cite[p.105, Example 2]{am}) respectively. Because $p$ is invertible in $\bZ_q$, the ideal
  \begin{equation*}
    \bZ_q\trianglelefteq B\cong \bZ_p\times \bZ_q
  \end{equation*}
  annihilates ${}_pM$ and hence $B\otimes_A {}_pM$. With just a trace amount of additional effort one can show that in fact
  \begin{equation*}
    \Ann_N(B\otimes_A {}_pM) = \bZ_q
    \quad\text{and}\quad
    \Ann_N(B\otimes_A {}_qM) = \bZ_p.
  \end{equation*}
  In particular, the two modules have annihilators that are incomparable under containment, so the weak containment relation has not survived the faithfully flat scalar extension along $A\to B$.
\end{example}

\begin{definition}\label{def:chainlie}
  \begin{enumerate}[(1)]
  \item Let $H$ be a Hopf algebra over a field $\Bbbk$. The {\it chain group} $\cC(H)$ is that of \Cref{def:cmon}, for
    \begin{itemize}
    \item the set $S$ of isomorphism classes of simple $H$-modules;
    \item the ternary relation
      \begin{equation}\label{eq:rrr}
        \rho\triangleleft (\rho',\rho'')
        \Longleftrightarrow
        \rho\preceq \rho'\otimes\rho'';
      \end{equation}
    \item the distinguished element of $S$ corresponding to the trivial $H$-module $\Bbbk$ induced by the counit $\varepsilon:H\to \Bbbk$;
    \item and the binary relation `$\sim$' defined by
      \begin{equation*}
        \rho \sim \text{ any irreducible representation weakly contained in the dual }\rho^*.
      \end{equation*}
    \end{itemize}
  \item Similarly, the chain group $\cC(\fg)$ of a Lie algebra $\fg$ is that of its universal enveloping algebra with its usual Hopf-algebra structure \cite[Example 1.5.4]{mont}: $\cC(\fg):=\cC(U(\fg))$. 
  \end{enumerate}
\end{definition}

\begin{remarks}\label{res:primenough}
  \begin{enumerate}[(1)]
  \item\label{item:13} References in \Cref{def:chainlie} to primitive ideals containing arbitrary ideals are unproblematic, so that $\cC(H)$ is indeed a well-defined group: every proper ideal is contained in a maximal one (for any unital ring, by Zorn's lemma), and in turn maximal ideals are primitive \cite[\S 3.1.6]{dixenv}.
  \item\label{item:11} Suppose the Lie algebra $\fg$ of \Cref{def:chainlie} is finite-dimensional, and let $\rho$ be an irreducible $\fg$-representation.
    
    The kernel of $\rho^*$ is easily seen to be $S(\ker\rho)$, where $S:U(\fg)\to U(\fg)$ is the antipode. Since $S$ is an anti-automorphism (the {\it principal anti-automorphism} $x\mapsto x^T$ of \cite[\S 2.2.18]{dixenv}), $\ker\rho^*$ is semi-prime and thus an intersection of primitive ideals (\Cref{re:weqsimples}). But this means that
    \begin{equation*}
      \rho^*\approx \bigoplus \pi
    \end{equation*}
    for irreducible $\pi\preceq \rho^*$ as in \Cref{re:weqsimples}. There are, in particular, ``enough'' irreducible representations that play the role of the inverse to $\rho$ in \Cref{def:chainlie}.
  \item\label{item:14} And in fact, if furthermore $\fg$ is solvable (as well as finite-dimensional), the primitive ideals of its enveloping algebra $U:=U(\fg)$ are precisely those prime ideals $I\trianglelefteq U$ for which the intersection of the primes $I'\supsetneq I$ contains $I$ strictly \cite[Theorem 4.5.7]{dixenv}.
    
    It follows from this characterization that any anti-automorphism of $U$ sends primitive ideals to primitive ideals, and hence $\ker \rho^*$ is {\it primitive} (as opposed to just (semi-)prime). The same remark is made in \cite[Chapitre I, \S 8]{duflo-nil}, along with related comments.
  \item\label{item:12} The generator $g_{\rho}\in \cC(H)$ of the chain group does not actually depend on the isomorphism class of $\rho$, but rather only on the primitive ideal $\ker\rho$. This is immediate from \Cref{pr:opscont} \Cref{item:8}, which implies that
    \begin{equation*}
      \rho\preceq \rho'\otimes\rho''
    \end{equation*}
    entails the same relation upon substituting for $\rho''$ (say) any irreducible representation weakly equivalent to it (i.e. having the same kernel).

    We will often take this observation for granted in the sequel, and extend the notation $g_{\rho}$ (for irreducible representations $\rho$) to $g_J$ (for primitive ideals $J=\ker \rho$).
  \end{enumerate}
\end{remarks}

In reference to \Cref{res:primenough} \Cref{item:12}, we observe that passing to the chain group obliterates the distinction between primitive ideals ordered by inclusion:

\begin{lemma}\label{le:incsame}
  Let $J\le J'\trianglelefteq H$ be two primitive ideals of a Hopf algebra. In the notation of \Cref{res:primenough} \Cref{item:12}, we have
  \begin{equation*}
    g_J = g_{J'}\text{ in }\cC(H). 
  \end{equation*}
\end{lemma}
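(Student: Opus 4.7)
The plan is to leverage the duality relations in the definition of $\cC(H)$, chaining them via the existence of a primitive ideal that dominates $S(J')$, where $S$ is the antipode of $H$. Pick irreducible representations $\rho$ and $\rho'$ with kernels $J$ and $J'$ respectively; since $S$ is an anti-automorphism, the kernels of the duals $\rho^*$ and $\rho'^*$ are $S(J)$ and $S(J')$, each a proper two-sided ideal, and $S(J) \le S(J')$ because $J \le J'$.

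Next, by Zorn's lemma (as invoked in \Cref{res:primenough} \Cref{item:13}), choose a primitive ideal $K$ of $H$ with $K \supseteq S(J')$, and let $\sigma$ be an irreducible representation with $\ker \sigma = K$. By construction $\sigma \preceq \rho'^*$, so the duality relation imposed in \Cref{def:chainlie} yields
\begin{equation*}
    g_{\sigma} = g_{\rho'}^{-1} \qquad \text{in } \cC(H).
\end{equation*}
On the other hand $K \supseteq S(J') \supseteq S(J) = \ker \rho^*$, so also $\sigma \preceq \rho^*$, and hence $g_{\sigma} = g_{\rho}^{-1}$ in $\cC(H)$ as well.

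Combining the two displayed equalities gives $g_{\rho}^{-1} = g_{\rho'}^{-1}$ in $\cC(H)$, whence $g_J = g_{\rho} = g_{\rho'} = g_{J'}$. There is essentially no substantive obstacle here: the argument is a direct application of the duality relations together with the fact that the antipode preserves inclusion of (proper) two-sided ideals and that any proper ideal is contained in a primitive ideal. Note that we never needed $\sigma$ to be the dual of any specific representation; the relations in $\cC(H)$ only demand weak containment in $\rho^*$, and the single witness $\sigma$ serves both $\rho$ and $\rho'$.
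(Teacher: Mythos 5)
Your argument is correct, but it takes a genuinely different route from the paper. The paper never touches the duality relation here: it tensors, noting that $J\le J'$ gives a weak containment between $\rho\otimes\rho'$ and $\rho'\otimes\rho'$ via \Cref{pr:opscont} \Cref{item:8}, so that any irreducible weakly contained in the one is weakly contained in the other, yielding $g_J g_{J'}=g_{J'}g_{J'}$ and hence $g_J=g_{J'}$ by cancellation in the group. You instead exploit the binary relation `$\sim$' of \Cref{def:chainlie}, producing a single irreducible $\sigma$ weakly contained in both $\rho^*$ and $\rho'^*$ and reading off $g_\rho^{-1}=g_\sigma=g_{\rho'}^{-1}$. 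Your version is shorter and avoids the tensor-compatibility lemma, but it leans on the identification of $\ker\rho^*$ with the image of $\ker\rho$ under the antipode $S$ --- which the paper only justifies (in \Cref{res:primenough} \Cref{item:11}) for enveloping algebras, where $S$ is an involutive anti-automorphism; for a general Hopf algebra one has $\ker\rho^*=S^{-1}(\ker\rho)$, and $S$ need not be bijective. Fortunately all you need is monotonicity ($J\le J'$ implies $\ker\rho^*\le\ker\rho'^*$) and properness of $\ker\rho'^*$, both of which hold since $\ker\rho^*$ is the kernel of a nonzero representation and preimages preserve inclusions, so the proof survives. One contextual cost: the paper's closing remark observes that none of its proofs make direct use of dual representations, a point your argument would forfeit.
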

\begin{proof}
  Let $\rho$ and $\rho'$ be irreducible representations with kernels $J$ and $J'$ respectively. We have $\rho\preceq \rho'$ by assumption, so that
  \begin{equation*}
    \rho\otimes\rho'\preceq \rho'\otimes\rho'
  \end{equation*}
  by \Cref{pr:opscont} \Cref{item:8}. But then irreducible representations weakly contained in the left-hand side are also weakly contained in the right-hand side, whence
  \begin{equation*}
    g_J g_{J'} = g_{\rho} g_{\rho'} = g_{\rho'} g_{\rho'} = g_{J'} g_{J'}
  \end{equation*}
  in $\cC(\fg)$. Since the latter is a group, this indeed implies $g_J = g_{J'}$. 
\end{proof}

As a simple consequence, chain groups are easy to understand for Hopf algebras with a ``large'' simple representation.

\begin{proposition}\label{pr:faiths}
  The chain group $\cC(H)$ of a Hopf algebra with a faithful simple module is trivial.
\end{proposition}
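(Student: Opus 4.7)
The plan is to read off the conclusion as a one-step consequence of \Cref{le:incsame}. Suppose $\rho_0 \colon H \to \End(V_0)$ is a faithful simple module, so that its kernel $J_0 := \ker \rho_0$ is the zero ideal and is in particular primitive. Every primitive ideal $J \trianglelefteq H$ then satisfies $J_0 = 0 \le J$, so \Cref{le:incsame} applies to the pair $(J_0, J)$ and yields $g_{J_0} = g_J$ in $\cC(H)$.

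The next observation is to pin down the value of this common element. The counit $\varepsilon \colon H \to \Bbbk$ is a one-dimensional, hence irreducible, representation, and by \Cref{def:chainlie} (via the chain-monoid construction of \Cref{def:cmon}) its associated generator $g_{\varepsilon}$ is the identity of $\cC(H)$. Taking $J = \ker \varepsilon$ in the previous step gives $g_{J_0} = g_{\ker \varepsilon} = 1$. Since $g_{J_0} = g_J$ for every primitive $J$, every generator of $\cC(H)$ is trivial, and the group itself is trivial.

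There is essentially no obstacle: the substantive work has been absorbed into \Cref{le:incsame}, whose proof in turn relies on \Cref{pr:opscont}(\ref{item:8}) and group-cancellation. Conceptually, the statement says that if $H$ has a primitive ideal as small as possible (namely $0$), then all primitive ideals collapse to the same class in the chain group, and that class must be the identity because it coincides with the class of the counit. I would present the argument essentially as above, with no further lemmas needed.
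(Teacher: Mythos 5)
Your proof is correct and follows the paper's own argument: both reduce the statement to \Cref{le:incsame} applied to the primitive ideal $\{0\}$, which is contained in every other primitive ideal. Your extra step identifying the common generator with $g_{\ker\varepsilon}=1$ via the counit just makes explicit what the paper leaves implicit in concluding that a chain group with a single generator class is trivial.
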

\begin{proof}
  Since $\{0\}$ is primitive and contained in any other primitive ideal $J\trianglelefteq H$, \Cref{le:incsame} says that $g_{J}=g_{\{0\}}$ for all $J$. In short, the chain group is a singleton (and hence trivial).
\end{proof}

In particular, \Cref{pr:faiths} applies to infinite-dimensional Lie algebras that have gained some recent attention: the infinite-rank $\fsl(\infty)$, $\fo(\infty)$ and $\fsp(\infty)$ obtained by embedding $\fsl(n)\subset \fsl(n+1)$ in the obvious fashion, as upper left-hand corner matrices (and similarly for orthogonal and symplectic matrices). The primitive ideals of their enveloping algebras are classified in \cite{pp3,pp4}, and as observed in \cite[Introduction]{pp3}, these enveloping algebras have (many) faithful simple modules; consequently:

\begin{corollary}\label{cor:infrk}
  The infinite-rank classical complex Lie algebras $\fsl(\infty)$, $\fo(\infty)$ and $\fsp(\infty)$ of \cite[\S 1]{pp3} have trivial chain groups.  \qedhere
\end{corollary}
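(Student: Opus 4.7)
The plan is to derive \Cref{cor:infrk} as an essentially immediate consequence of \Cref{pr:faiths}, which already reduces the matter to exhibiting a faithful simple module over each of the relevant enveloping algebras.

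First, I would observe that by \Cref{def:chainlie}(2) the chain group $\cC(\fg)$ of a Lie algebra $\fg$ is by definition $\cC(U(\fg))$, and the enveloping algebra $U(\fg)$ carries its standard Hopf structure (comultiplication primitive on $\fg$, counit annihilating $\fg$, antipode $x\mapsto -x$ on $\fg$), so \Cref{pr:faiths} applies verbatim: if $U(\fg)$ admits a faithful simple module, then $\cC(\fg)$ is trivial. Thus the entire content of the corollary is the claim that each of $U(\fsl(\infty))$, $U(\fo(\infty))$ and $U(\fsp(\infty))$ has a faithful simple module.

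Next I would invoke the existing literature rather than constructing such modules from scratch: the primitive-ideal theory of these infinite-rank algebras developed in \cite{pp3,pp4} identifies $\{0\}\trianglelefteq U(\fg)$ as a primitive ideal for $\fg\in\{\fsl(\infty),\fo(\infty),\fsp(\infty)\}$ (the introduction of \cite{pp3} explicitly notes the abundance of faithful simple modules). Equivalently, there is an irreducible representation $\rho:U(\fg)\to\End(V)$ with $\ker\rho=\{0\}$, which is exactly the faithful simple module required.

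Combining these two ingredients finishes the proof: \Cref{pr:faiths} applied with $H=U(\fg)$ yields $\cC(\fg)=\{1\}$ for each of the three algebras. There is no genuine obstacle here; the substantive work has been done upstream, in \Cref{le:incsame} and \Cref{pr:faiths} (on the chain-group side) and in \cite{pp3,pp4} (on the representation-theoretic side), and the only care needed is to cite the existence of a faithful simple $U(\fg)$-module with a precise reference, since this is the sole nontrivial input.
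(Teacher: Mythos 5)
Your proposal is correct and matches the paper's own argument exactly: the corollary is deduced by applying \Cref{pr:faiths} to $U(\fg)$ with its standard Hopf structure, with the existence of faithful simple modules over $U(\fsl(\infty))$, $U(\fo(\infty))$ and $U(\fsp(\infty))$ imported from the introduction of \cite{pp3} (and the classification in \cite{pp3,pp4}). No further comment is needed.
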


The significance of this remark in the present context will become apparent later, when we seek to identify (typically for {\it finite}-dimensional Lie algebras) the chain group $\cC(\fg)$ with the dual $\fz^*$ of the center $\fz\le \fg$. Because $\fsl(\infty)$, $\fo(\infty)$ and $\fsp(\infty)$ have trivial centers, \Cref{cor:infrk} fits into the same pattern as \Cref{th:solv,th:ss} below.

\begin{remark}
  Although \Cref{cor:infrk} focuses on a few specific infinite-dimensional Lie algebras, {\it finite}-dimensional examples fitting into the mold of \Cref{pr:faiths} exist: according to \cite[Theorem 6.1.1 and Lemma 6.1.2 (i)]{dixenv}, for instance, the non-abelian 2-dimensional Lie algebra has the requisite property.
\end{remark}

\begin{convention}
  We henceforth focus on
  \begin{itemize}
  \item finite-dimensional Lie algebras;
  \item over algebraically closed fields $\Bbbk$ of characteristic zero.
  \end{itemize}
  Unless specified otherwise (e.g. in reverting to the general case by explicitly mentioning `arbitrary fields' or some such phrase), these assumptions are in place throughout the sequel.
\end{convention}

The next few subsections are titled for the various classes of Lie algebras under consideration therein.

\subsection{Nilpotent}\label{subse:nil}

Assume until further notice that $\fg$ is finite-dimensional. Because furthermore the ground field is algebraically closed, every irreducible representation has a {\it central character} \cite[\S 2.6.7 and Proposition 2.6.8]{dixenv}: the center $Z(\fg)$ of the enveloping algebra $U(\fg)$ acts via an algebra morphism $\chi:Z(\fg)\to \Bbbk$. In particular, the center $\fz(\fg)$ of $\fg$ acts via a linear functional
\begin{equation*}
  \chi|_{\fz(\fg)}\in \fz(\fg)^*;
\end{equation*}
this provides a canonical group morphism
\begin{equation}\label{eq:canlie}
  \cat{can}=\cat{can}_{\fg}:\cC(\fg)\to \fz(\fg)^*,
\end{equation}
and it will be of interest to determine when/whether this map is a group isomorphism. Its additivity is clear, and moreover surjectivity is unproblematic:

\begin{lemma}\label{le:cansurj}
  For any finite-dimensional Lie algebra $\fg$ over the algebraically closed field $\Bbbk$ the morphism \Cref{eq:canlie} is onto.
\end{lemma}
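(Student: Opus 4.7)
Given $\lambda\in\fz^*$ (where $\fz=\fz(\fg)$), I need to exhibit an irreducible $\fg$-representation whose central character agrees with $\lambda$ on $\fz$; this will suffice for surjectivity of $\cat{can}$. The natural construction is to induce the $1$-dimensional $\fz$-representation $\Bbbk_\lambda$ up to $\fg$: set
\begin{equation*}
  M:=\Ind_{\fz}^{\fg}\Bbbk_\lambda = U(\fg)\otimes_{U(\fz)}\Bbbk_\lambda.
\end{equation*}
By the Poincar\'e--Birkhoff--Witt theorem (\cite[Theorem 2.1.11]{dixenv}), $U(\fg)$ is free as a right $U(\fz)$-module, so $M$ is nonzero and cyclic, generated by $x:=1\otimes 1$.

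The crucial point is that because $\fz$ is central in $\fg$, for any $z\in\fz$ and $u\in U(\fg)$ one has $zu=uz$, so
\begin{equation*}
  z\cdot(u\otimes 1) = (uz)\otimes 1 = u\otimes(z\cdot 1) = \lambda(z)\,(u\otimes 1).
\end{equation*}
Hence $\fz$ acts on $M$ by the scalars $\lambda$, and the same will be true of any subquotient of $M$.

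It remains to produce an irreducible quotient of $M$. Since $M=U(\fg)x$ is cyclic, a standard Zorn's-lemma argument on the poset of submodules \emph{not containing $x$} yields a maximal such submodule $N$; maximality forces $N$ to be a maximal proper submodule, so $\pi:=M/N$ is an irreducible $\fg$-representation. By the previous paragraph, $\fz$ acts on $\pi$ via $\lambda$, so the central character of $\pi$ restricts to $\lambda$ on $\fz$, whence $\cat{can}(g_\pi)=\lambda$.

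There is essentially no real obstacle here; the only care needed is in the existence of the irreducible quotient, which is handled by the Zorn argument applicable to any cyclic module over any unital ring. The other two ingredients (PBW freeness ensuring $M\neq 0$, and centrality of $\fz$ ensuring scalar action) are immediate.
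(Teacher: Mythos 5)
Your proof is correct and follows essentially the same route as the paper: induce the functional $\lambda\in\fz^*$ from the center up to $\fg$ and extract an irreducible representation on which $\fz$ still acts by $\lambda$. The only (cosmetic) difference is that the paper takes any irreducible representation \emph{weakly} contained in $\Ind_{\fz}^{\fg}(\lambda)$ (i.e.\ any primitive ideal above its kernel, which exists since every proper ideal sits inside a maximal, hence primitive, one), whereas you produce an honest simple quotient of the cyclic induced module via Zorn's lemma -- both are valid and both rely on the same two observations (PBW freeness for nonvanishing, centrality of $\fz$ for the scalar action).
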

\begin{proof}
  Consider an arbitrary functional $f$ on the center $\fz:= \fz(\fg)$, regarded as a 1-dimensional representation of $\fz$. Any irreducible $\fg$-representation
  \begin{equation*}
    \rho\preceq \Ind_{\fz}^{\fg}(f)
  \end{equation*}
  (such representations do exist by \Cref{res:primenough} \Cref{item:13}) will then be acted upon by $\fz$ via $f$, so $f$ is the image of $g_{\rho}$ through \Cref{eq:canlie}.
\end{proof}

The main result to be discussed here is

\begin{theorem}\label{th:nil}
  Let $\fg$ be a finite-dimensional nilpotent Lie algebra over the algebraically closed field $\Bbbk$ of characteristic zero.

  The canonical map \Cref{eq:canlie} is an isomorphism.
\end{theorem}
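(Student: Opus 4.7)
Since \Cref{le:cansurj} already provides surjectivity of $\cat{can}$, the task is to prove injectivity. The first step is a standard reduction: if two irreducible $\fg$-representations $\rho_1, \rho_2$ share a central character $\chi \in \fz^*$, then any irreducible $\pi$ weakly contained in $\rho_1 \otimes \rho_2^*$ has trivial central character, and the defining relations of \Cref{def:chainlie}, combined with the identity $g_{\rho_2^*} = g_{\rho_2}^{-1}$, yield $g_{\rho_1} g_{\rho_2}^{-1} = g_\pi$. Injectivity of $\cat{can}$ thus reduces to the key claim: $g_\rho = 1$ in $\cC(\fg)$ whenever $\rho$ is an irreducible $\fg$-representation with trivial central character.

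The main computational input is the projection formula for induced representations. Because $\rho|_\fz$ is a trivial $\fz$-module (central character zero), for any $\chi \in \fz^*$ one obtains an isomorphism of $\fg$-modules
\begin{equation*}
  \rho \otimes \Ind_\fz^\fg(\chi) \;\cong\; \Ind_\fz^\fg\bigl(\rho|_\fz \otimes \chi\bigr) \;\cong\; V_\rho \otimes_\Bbbk \Ind_\fz^\fg(\chi),
\end{equation*}
a direct sum of $\dim V_\rho$ copies of $\Ind_\fz^\fg(\chi)$. In particular $\rho \otimes \Ind_\fz^\fg(\chi)$ and $\Ind_\fz^\fg(\chi)$ are weakly equivalent as $\fg$-modules: they share their $U(\fg)$-annihilator.

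To leverage this, invoke Dixmier's classification of primitive ideals of $U(\fg)$ for nilpotent $\fg$ by coadjoint orbits $\Omega \subset \fg^*$, where the central character attached to $\Omega$ is $f|_\fz$ for any $f \in \Omega$. For a $\chi \in \fz^*$ such that only one primitive ideal has central character $\chi$, $\Ind_\fz^\fg(\chi)$ is weakly equivalent to a single irreducible $\sigma$, and the display above gives $\rho \otimes \sigma \approx \sigma$; the chain relation $g_\rho g_\sigma = g_\pi$ for any irreducible $\pi \preceq \rho \otimes \sigma$, together with $\pi \approx \sigma$ in the weak-equivalence sense, then forces $g_\rho = 1$.

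The principal obstacle I foresee is securing such a uniqueness statement about primitive ideals in sufficient generality: not every nilpotent $\fg$ need admit a ``regular'' $\chi$ whose fiber in $\Prim U(\fg)$ is a singleton. The fallback is induction on $\dim \fg$: pick $z \in \fz \setminus \{0\}$; representations on which $z$ acts trivially descend to the smaller nilpotent algebra $\fg/\Bbbk z$ and are handled by the inductive hypothesis (modulo verifying that the induced map $\cC(\fg/\Bbbk z) \to \cC(\fg)$ interacts correctly with the central character), while representations on which $z$ acts as a non-zero scalar admit a Heisenberg-like reduction via Kirillov's lemma that produces elements $x, y \in \fg$ with $[x,y] = z$, amenable to a Stone--von Neumann-style uniqueness argument at the level of primitive ideals.
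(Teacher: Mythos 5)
Your reduction to the claim ``$g_\rho=1$ whenever $\rho$ has trivial central character'' is sound, and the projection-formula computation $\rho\otimes\Ind_\fz^\fg(\chi)\cong V_\rho\otimes_\Bbbk\Ind_\fz^\fg(\chi)$, hence $\rho\otimes\Ind_\fz^\fg(\chi)\approx\Ind_\fz^\fg(\chi)$, is correct (it is an instance of \Cref{le:hopfpp}). But the obstacle you flag is a genuine gap, and neither of your two escape routes closes it. First, a ``regular'' $\chi\in\fz^*$ whose fiber in $\Prim U(\fg)$ is a singleton need not exist: that fiber is in bijection with the set of coadjoint orbits contained in the affine subspace $\{f\in\fg^*\ :\ f|_\fz=\chi\}$, which has dimension $\dim\fg-\dim\fz$, whereas coadjoint orbits are even-dimensional; so whenever $\dim\fg-\dim\fz$ is odd \emph{no} fiber is a singleton. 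Concretely, for the free two-step nilpotent Lie algebra on three generators ($\dim\fg=6$, $\dim\fz=3$) every fiber is infinite. Without uniqueness, your display only yields $g_\rho g_\sigma=g_\pi$ for two possibly inequivalent irreducibles $\pi,\sigma$ sharing the central character $\chi$; showing that such $\pi$ and $\sigma$ have equal chain-group images is exactly the injectivity being proved, so the argument becomes circular. Second, the fallback induction does not cover the remaining cases: a $\rho$ with trivial central character kills all of $\fz$, so the ``$z$ acts by a nonzero scalar'' branch never applies to it, while in the ``$z$ acts trivially'' branch the quotient $\fg/\Bbbk z$ typically has a strictly larger center and the descended representation can have a nontrivial central character there; the inductive hypothesis then gives $g_{\bar\rho}\ne 1$ in $\cC(\fg/\Bbbk z)$, which says nothing about $g_\rho$ in $\cC(\fg)$. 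Already for the Heisenberg algebra and a nonzero character $f$ with $f(z)=0$ this branch is vacuous (there the regular-$\chi$ trick saves you, but that is precisely the route that fails in general).

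The paper sidesteps the fiber-uniqueness issue by proving additivity over all of $\fg^*$ rather than over $\fz^*$: with $I(f)$ the primitive ideal attached to $f\in\fg^*$ by Dixmier's orbit method, \Cref{le:tensnil} gives $g_{I(f)}g_{I(f')}=g_{I(f+f')}$ (the tensor product of the two induced representations is the diagonal restriction of an induced representation of $\fg\oplus\fg$, controlled via \Cref{le:indres}). The resulting additive surjection $\fg^*\to\cC(\fg)$ factors through the coadjoint coinvariants $\fg^*_{\cA}\cong\fz^*$, and its composite with \Cref{eq:canlie} is that identification, which forces \Cref{eq:canlie} to be injective. To salvage your approach you would need an analogue of this statement---e.g.\ a direct proof that $g_{I(f)}$ depends only on $f|_\fz$---and that is where the real work lies.
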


We need some preparation. First, the following simple observation is an analogue of sorts (albeit a much less precise and powerful one) for the usual induction-restriction result on unitary representations of locally compact groups \cite[Theorem 12.1]{mack-ind1}.

\begin{lemma}\label{le:indres}
  Let $\fh,\fg'\le \fg$ be inclusions of Lie algebras and $\rho:\fh\to \End(W)$ an $\fh$-representation. We then have the inclusion
  \begin{equation}\label{eq:tworeps}
    \ker\Ind_{\fh}^{\fg}(\rho)|_{\fg'}
    \le 
    \ker\Ind_{\fh\cap \fg'}^{\fg'}(\rho|_{\fh\cap \fg'})
  \end{equation}
\end{lemma}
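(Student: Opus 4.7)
The plan is to exhibit an injective $U(\fg')$-module map
\begin{equation*}
  \iota:\Ind_{\fh\cap \fg'}^{\fg'}(\rho|_{\fh\cap \fg'})\hookrightarrow \Ind_{\fh}^{\fg}(\rho)\big|_{\fg'},
\end{equation*}
after which the stated containment of kernels is immediate: anything in $U(\fg')$ annihilating the right-hand module will, via $\iota$, annihilate the left-hand one as well.

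First I would define $\iota$ on elementary tensors by $u'\otimes w\mapsto u'\otimes w$, viewing $u'\in U(\fg')$ as an element of $U(\fg)$ under the canonical inclusion. Well-definedness is a routine check: for $y\in U(\fh\cap \fg')\subseteq U(\fh)$, the relation $(u'y)\otimes w = u'\otimes (yw)$ already holds in the target $U(\fg)\otimes_{U(\fh)}W$, and $U(\fg')$-linearity is built into the definition. This step is essentially formal.

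The main content is injectivity, for which I would invoke the Poincaré--Birkhoff--Witt theorem in its flavor that gives us free module structures with respect to vector-space complements. Choose a vector-space complement $V'$ of $\fh\cap\fg'$ inside $\fg'$. Since $V'\cap \fh\subseteq \fg'\cap \fh\cap V' = (\fh\cap\fg')\cap V'=0$, the subspace $V'$ is linearly independent from $\fh$ inside $\fg$, so one can extend it to a vector-space complement $V$ of $\fh$ in $\fg$ with $V'\subseteq V$. By PBW, ordered monomials in a basis of $V$ form a basis of $U(\fg)$ as a right $U(\fh)$-module, and the subset consisting of monomials in the basis of $V'$ is precisely a right $U(\fh\cap\fg')$-module basis of $U(\fg')$. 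Consequently, bases of the two induced modules are compatible: elementary tensors $m'\otimes w_j$ (for $m'$ a PBW monomial in the basis of $V'$ and $\{w_j\}$ a basis of $W$) form a basis of the source and map bijectively onto a subset of the basis of the target. Thus $\iota$ is injective.

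The obstacle I would need to be most careful about is the choice of $V\supseteq V'$ (ensuring $V'$ stays disjoint from $\fh$ so it extends to a complement of $\fh$ in all of $\fg$); this is where one might go astray by picking $V$ first. Once this mild linear-algebra point is cleared, the proof reduces to PBW bookkeeping and the observation that $\iota$ is a $\fg'$-equivariant inclusion of modules, whence $\ker_{U(\fg')}\bigl(\Ind_{\fh}^{\fg}(\rho)|_{\fg'}\bigr)\subseteq \ker_{U(\fg')}\bigl(\Ind_{\fh\cap \fg'}^{\fg'}(\rho|_{\fh\cap \fg'})\bigr)$, as desired.
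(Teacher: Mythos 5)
Your proof is correct, and it reaches the conclusion by a route that is organized differently from the paper's, even though the underlying linear algebra is the same. The paper does not construct a module map at all: it applies \cite[Proposition 5.1.7]{dixenv} to compute the annihilator in $U(\fg')$ of the generating subspace $W$ sitting inside each induced module, checks via a PBW basis that these two annihilators agree (both equal to $U(\fg')\cdot\ker(\rho|_{\fh\cap\fg'})$), and then concludes using the fact that the kernel of a representation is the largest two-sided ideal contained in the annihilator of any generating set. You instead package the same PBW bookkeeping into an explicit injective $U(\fg')$-module map $\iota:\Ind_{\fh\cap\fg'}^{\fg'}(\rho|_{\fh\cap\fg'})\hookrightarrow \Ind_{\fh}^{\fg}(\rho)|_{\fg'}$, from which the containment of annihilators is immediate (and yes, injectivity is genuinely needed here, not just well-definedness, since otherwise $am$ could land in $\ker\iota$). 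The basis choices are essentially identical: your complement $V\supseteq V'$ corresponds exactly to the first two blocks of the paper's ordered basis (a complement of $\fh+\fg'$ in $\fg$ together with a complement of $\fh\cap\fg'$ in $\fg'$), and your observation that $V'\cap\fh=0$ so that $V'$ extends to a complement of $\fh$ is precisely the point that makes either argument work. Your version is somewhat more self-contained, avoiding the ``largest two-sided ideal'' characterization; the paper's version has the advantage of reusing machinery (Dixmier's Proposition 5.1.7) that it invokes elsewhere, e.g.\ in the proof of \Cref{pr:opscont}. One small merit of your argument worth noting explicitly: the lemma imposes no finite-dimensionality hypothesis, and your construction of $V''$ with $V=V'\oplus V''$ goes through verbatim for infinite-dimensional $\fg$ once the PBW bases are well-ordered.
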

\begin{proof}
  We write $W_{\fg}$ and $W_{\fg'}$ for the carrier spaces of the two representations
  \begin{equation*}
    \Ind_{\fh}^{\fg}(\rho)|_{\fg'}
    \quad\text{and}\quad
    \Ind_{\fh\cap \fg'}^{\fg'}(\rho|_{\fh\cap \fg'})
  \end{equation*}
  respectively, and denote by $J\le U(\fh)$ the kernel of $\rho$.

  According to \cite[Proposition 5.1.7 (i)]{dixenv}, the annihilator of $W\le W_{\fg}$ in $U(\fg)$ is the left ideal $U(\fg)J$. Choose an ordered basis for $\fg$ consisting, in this order, of
  \begin{itemize}
  \item a basis for a subspace of $\fg$ supplementing $\fh+\fg'$;
  \item a basis for a subspace of $\fg'$ supplementing $\fh\cap \fg'$;
  \item one for $\fh\cap \fg'$;
  \item and finally, one for a subspace of $\fh$ supplementing $\fh\cap \fg'$.
  \end{itemize}
  Plugging that basis into the PBW theorem (\cite[Theorem 2.1.11]{dixenv} or \cite[Theorem 6.1.1]{muss}), it will follow easily that an element of $U(\fg')$ annihilates $W\le W_{\fg}$ if and only if it belongs to the left ideal
  \begin{equation}\label{eq:ufgdot}
    U(\fg')\cdot\left(\text{kernel of }\rho_{\fh\cap \fg'}\right).
  \end{equation}
  But that means that the kernel of the left-hand side of \Cref{eq:tworeps} is a bilateral ideal of $U(\fg')$ contained in the right-hand side of \Cref{eq:ufgdot}, whereas by \cite[Proposition 5.1.7 (ii)]{dixenv} the right-hand side of \Cref{eq:tworeps} is the {\it largest} such ideal. The inclusion \Cref{eq:tworeps} follows. 
\end{proof}

Recall now, briefly, how the classification of primitive ideals for solvable Lie algebras proceeds (the process is summarized in \cite[\S 6.1.5]{dixenv}); this will also serve to fix some notation.

\begin{construction}\label{con:primsolv}
  Throughout, $\fg$ is assumed solvable (as always, over an algebraically-closed characteristic-0 field).
  \begin{itemize}
  \item Consider an element $f\in \fg^*$, i.e. a linear functional on $\fg$.
  \item To it, associate any {\it polarization} $\fh\le \fg$; recall \cite[1.12.8]{dixenv} that this means
    \begin{itemize}
    \item $\fh$ is {\it subordinate} to $f$ in the sense that $f|_{[\fh,\fh]}\equiv 0$;
    \item and its dimension achieves the theoretical maximum:
      \begin{equation*}
        \dim\fh = \frac 12\left(\dim\fg + \dim\fg^f\right),
      \end{equation*}
      where
      \begin{equation*}
        \fg^f:=\{x\in \fg\ |\ f([x,y])=0,\ \forall y\in \fg\}
      \end{equation*}
      is the Lie subalgebra of $\fg$ leaving $f$ invariant under the coadjoint action.
    \end{itemize}    
    Polarizations always exist for solvable Lie algebras over algebraically closed fields \cite[Proposition 1.12.10]{dixenv}. 
  \item Being $f$-subordinate, $\fh$ carries a one-dimensional representation induced by $f$; we denote it by the same symbol (i.e. `$f$').
  \item Now form the twisted induced representation $\widetilde{\Ind}_{\fh}^{\fg}(f)$ as in \Cref{not:induced}.
  \item Set
    \begin{equation*}
      I(f):=\ker \widetilde{\Ind}_{\fh}^{\fg}(f).
    \end{equation*}
    This turns out to be a primitive ideal of $U(\fg)$.
  \item That primitive ideal does not actually depend on the polarization $\fh$ or indeed even on $f$ itself, but only on the orbit of $f$ under the action of the algebraic adjoint group $\cA$ attached to $\fg$.
  \item And the resulting map
    \begin{equation*}      
      \overline{I}:\fg^*/\cA \to \Prim(U)
    \end{equation*}
    is bijective.
  \end{itemize}
\end{construction}

\begin{notation}\label{not:irrif}
  As a matter of convenience, we occasionally write $\rho_{f}$ for an irreducible representation with kernel $I(f)$ as in \Cref{con:primsolv}. Such a representation is in general not unique subject to this condition, but this will not matter whenever the notation is in use.
\end{notation}

In reference to all of this, we now have

\begin{lemma}\label{le:resnil}
  Let $\fg'\le \fg$ be an inclusion of finite-dimensional nilpotent Lie algebras and $f\in \fg^*$. In the notation of \Cref{con:primsolv}, we have the inclusion
  \begin{equation}\label{eq:resnil}
    I(f)\cap U(\fg') \le I(f|_{\fg'}).
  \end{equation}
\end{lemma}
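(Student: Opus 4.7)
The strategy is to push the problem from $\fg$ down to $\fg'$ via \Cref{le:indres}, then identify the resulting kernel in $U(\fg')$ with $I(f|_{\fg'})$ by extending a suitable subordinate subalgebra to a polarization of $f|_{\fg'}$ inside $\fg'$.

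Concretely, I would fix a polarization $\fh\le\fg$ of $f$, so that $I(f)=\ker\Ind_{\fh}^{\fg}(f)$ (using \Cref{re:0twist} to discard the twist, since $\fg$ is nilpotent). Taking $\rho$ in \Cref{le:indres} to be $f$ viewed as the one-dimensional representation of $\fh$ then yields
\begin{equation*}
I(f)\cap U(\fg') \;=\; \ker\bigl(\Ind_{\fh}^{\fg}(f)|_{U(\fg')}\bigr) \;\le\; \ker\Ind_{\fh\cap\fg'}^{\fg'}\bigl(f|_{\fh\cap\fg'}\bigr),
\end{equation*}
and $\fh\cap\fg'$ is automatically subordinate to $f|_{\fg'}$ since $f|_{[\fh,\fh]}\equiv 0$ forces $f|_{[\fh\cap\fg',\fh\cap\fg']}\equiv 0$. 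It then suffices to bound $\ker\Ind_{\fh\cap\fg'}^{\fg'}(f|_{\fh\cap\fg'})$ from above by $I(f|_{\fg'})$.

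For that second step I would enlarge $\fh\cap\fg'$ to a polarization $\fh'\le\fg'$ of $f|_{\fg'}$, available in the nilpotent setting as a standard Vergne-type result. Transitivity of induction gives
\begin{equation*}
\Ind_{\fh\cap\fg'}^{\fg'}\bigl(f|_{\fh\cap\fg'}\bigr) \;\cong\; \Ind_{\fh'}^{\fg'}\Bigl(\Ind_{\fh\cap\fg'}^{\fh'}\bigl(f|_{\fh\cap\fg'}\bigr)\Bigr),
\end{equation*}
and Frobenius reciprocity---applied to the identity map $\Bbbk_{f|_{\fh\cap\fg'}}\to\Bbbk_{f|_{\fh'}}|_{\fh\cap\fg'}$, which is a well-defined $\fh\cap\fg'$-morphism because the two one-dimensional representations agree on the smaller subalgebra---produces a surjection $\Ind_{\fh\cap\fg'}^{\fh'}(f|_{\fh\cap\fg'})\twoheadrightarrow\Bbbk_{f|_{\fh'}}$. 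Inducing up to $\fg'$ yields a surjection $\Ind_{\fh\cap\fg'}^{\fg'}(f|_{\fh\cap\fg'})\twoheadrightarrow\Ind_{\fh'}^{\fg'}(f|_{\fh'})$, whose target has kernel $I(f|_{\fg'})$ by \Cref{con:primsolv} and \Cref{re:0twist}. Containment of kernels then closes the chain.

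The main obstacle is the extension of $\fh\cap\fg'$ to a polarization of $f|_{\fg'}$ inside $\fg'$: in the broader solvable setting such an extension need not exist, but the nilpotent hypothesis is precisely what guarantees it. Everything else reduces to \Cref{le:indres}, transitivity of induction, and Frobenius reciprocity applied to one-dimensional representations.
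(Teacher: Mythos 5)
The first half of your argument coincides with the paper's: discard the twist via \Cref{re:0twist}, pick a polarization $\fh$ of $f$, and apply \Cref{le:indres} to land inside $\ker\Ind_{\fh\cap\fg'}^{\fg'}(f|_{\fh\cap\fg'})$. The gap is in your second step. It is \emph{not} true, even for nilpotent Lie algebras, that a subalgebra subordinate to a functional can be enlarged to a polarization; this is not what Vergne's theorem says (Vergne produces \emph{some} polarization adapted to a flag of ideals, not one containing a prescribed subordinate subalgebra). Indeed, maximal subordinate subalgebras need not be polarizations: in the $5$-dimensional filiform algebra with basis $e_1,\dots,e_5$ and $[e_1,e_i]=e_{i+1}$ for $i=2,3,4$, take $f=e_5^*$; then $\fg^f=\spn\{e_2,e_3,e_5\}$, so polarizations have dimension $4$, yet $\spn\{e_1,e_5\}$ is a subordinate subalgebra that is maximal under inclusion and has dimension $2$. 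You give no reason why the particular subalgebra $\fh\cap\fg'$ should be better behaved than an arbitrary subordinate subalgebra, so the proof does not close.

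The rest of your second step (transitivity of induction, the Frobenius-reciprocity surjection onto $\Ind_{\fh'}^{\fg'}(f|_{\fh'})$, and the resulting containment of annihilators) would be fine \emph{if} such an $\fh'$ existed, but the missing extension is essentially the whole difficulty. The paper sidesteps it by citing \cite[Lemma 6.4.3]{dixenv}, which states directly that for a nilpotent Lie algebra and \emph{any} subalgebra subordinate to a functional $g$, the kernel of the induced representation is contained in $I(g)$; that lemma is a genuine ingredient of the nilpotent orbit method (proved by induction on dimension) and is exactly what your shortcut is implicitly trying to reprove.
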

\begin{proof}
  Because we are working with {\it nilpotent} Lie algebras, twisted induction is just plain induction (\Cref{re:0twist}). Choose a polarization $\fh$ for $f$, so that
  \begin{equation*}
    I(f) = \ker\Ind_{\fh}^{\fg}(f). 
  \end{equation*}
  By \Cref{le:indres}, its intersection with $U(\fg')$ (i.e. the left-hand side of \Cref{eq:resnil}) is contained in the kernel of the induced representation $\Ind_{\fh\cap \fg'}^{\fg'}(f|_{\fh\cap \fg'})$. But because
  \begin{equation*}
    \fh\cap \fg'\le \fg'
  \end{equation*}
  is subordinate to $f|_{\fg'}$, that kernel is in turn contained in the right-hand side of \Cref{eq:resnil} by \cite[Lemma 6.4.3]{dixenv}.
\end{proof}

\begin{lemma}\label{le:tensnil}
  Let $\fg$ be a finite-dimensional nilpotent Lie algebra and $f,f'\in \fg^*$ two functionals with respective polarizations $\fh,\fh'\le \fg$. 

  We then have the inclusion
  \begin{equation}\label{eq:tensnil}
    \ker\left(\Ind_{\fh}^{\fg}(f)\otimes \Ind_{\fh'}^{\fg}(f')\right) \le I(f+f').
  \end{equation}
\end{lemma}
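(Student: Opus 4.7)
The plan is to realize $\Ind_{\fh}^{\fg}(f)\otimes \Ind_{\fh'}^{\fg}(f')$ as a diagonal restriction of a single induced representation on $\fg\oplus\fg$, and then read off the conclusion from \Cref{le:resnil}.

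First I would observe that under the standard identification $U(\fg\oplus\fg)\cong U(\fg)\otimes U(\fg)$ (via PBW applied to a concatenated basis), the restriction along the diagonal embedding $\fg\hookrightarrow \fg\oplus\fg$ is precisely the comultiplication of $U(\fg)$, since $x\in\fg$ maps to $(x,x)\leftrightarrow x\otimes 1+1\otimes x=\Delta(x)$. Consequently, the tensor product appearing in \Cref{eq:tensnil} --- viewed as a $\fg$-module via $\Delta$ --- is the pull-back along this diagonal of the external product $\Ind_{\fh}^{\fg}(f)\boxtimes\Ind_{\fh'}^{\fg}(f')$, which in turn agrees with the induced module $\Ind_{\fh\oplus\fh'}^{\fg\oplus\fg}(f\oplus f')$ by compatibility of induction with tensor products over product subalgebras. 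Here $f\oplus f'\in(\fg\oplus\fg)^*$ is the functional $(x,y)\mapsto f(x)+f'(y)$.

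Next I would verify that $\fh\oplus\fh'$ is a polarization of $f\oplus f'$ in $\fg\oplus\fg$, so that the kernel of the induced module is exactly $I(f\oplus f')$ per \Cref{con:primsolv}. Subordinacy is immediate from $[\fh\oplus\fh',\fh\oplus\fh']=[\fh,\fh]\oplus[\fh',\fh']$. For the dimension count, componentwise bracketing on $\fg\oplus\fg$ gives $(\fg\oplus\fg)^{f\oplus f'}=\fg^f\oplus\fg^{f'}$, whence
\begin{equation*}
  \dim(\fh\oplus\fh')=\dim\fh+\dim\fh'=\tfrac12\bigl(\dim(\fg\oplus\fg)+\dim(\fg\oplus\fg)^{f\oplus f'}\bigr).
\end{equation*}
Finally, I would apply \Cref{le:resnil} to the diagonal inclusion $\fg_{\Delta}\le \fg\oplus\fg$ with the functional $f\oplus f'$; since the restriction of $f\oplus f'$ to $\fg_{\Delta}\cong \fg$ is $f+f'$, the lemma yields
\begin{equation*}
  I(f\oplus f')\cap U(\fg_{\Delta})\le I(f+f'),
\end{equation*}
which is precisely \Cref{eq:tensnil} once the identifications of the first step are unravelled.

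The main obstacle --- essentially the only non-formal point --- is the compatibility asserted in step one: that the tensor product (via $\Delta$) of two induced modules is itself an induction from $\fh\oplus\fh'$ up to $\fg\oplus\fg$. This is standard induction-in-stages for a product of Lie algebras, amounting to base-change along the Hopf-algebra isomorphism $U(\fg)\otimes U(\fg)\cong U(\fg\oplus\fg)$ and the analogous identification $U(\fh)\otimes U(\fh')\cong U(\fh\oplus\fh')$, but it is the spot requiring care; everything else is either a polarization bookkeeping exercise or a direct invocation of \Cref{le:resnil}.
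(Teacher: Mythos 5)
Your argument is correct and is essentially the paper's own proof: both realize the internal tensor product as the restriction along the diagonal $\fg\le\fg\oplus\fg$ of the external product $\Ind_{\fh\oplus\fh'}^{\fg\oplus\fg}(f\oplus f')$, check that $\fh\oplus\fh'$ polarizes $f\oplus f'$, and then invoke \Cref{le:resnil}. Your explicit verification of the polarization dimension count is a welcome bit of extra detail, but the route is the same.
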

\begin{proof}
  This is a fairly straightforward application of \Cref{le:resnil} to the diagonal inclusion $\fg\le \fg\oplus \fg$:
  \begin{itemize}
  \item The enveloping algebra $U(\fg\oplus \fg)$ is the tensor square $U(\fg)^{\otimes 2}$ \cite[Proposition 2.2.10]{dixenv}.
  \item The subalgebra
    \begin{equation*}
      \fh\oplus \fh'\le \fg\oplus \fg
    \end{equation*}
    is a polarization for $f+f'\in \fg^*\oplus \fg^*$.
  \item Induction plays well with external tensor products:
    \begin{equation*}
      \Ind_{\fh\oplus \fh'}^{\fg\oplus \fg}(f+f')\cong \Ind_{\fh}^{\fg}(f)\otimes \Ind_{\fh'}^{\fg}(f')
    \end{equation*}
    as modules over
    \begin{equation*}
      U(\fg\oplus \fg)\cong U(\fg)\otimes U(\fg). 
    \end{equation*}
  \item And finally, the {\it internal} tensor product appearing on the left-hand side of \Cref{eq:tensnil} is the restriction of that same tensor product regarded as a $(U(\fg)\otimes U(\fg))$-module along the comultiplication \cite[\S 2.7.1]{dixenv}
    \begin{equation*}
      U(\fg)\to U(\fg)\otimes U(\fg)
    \end{equation*}
    that lifts the diagonal embedding $\fg\to \fg\oplus \fg$. 
  \end{itemize}
  This proves the claim. 
\end{proof}

\begin{remark}\label{re:add-vs-inv}
  \Cref{le:tensnil} is an additivity result of sorts for the map
  \begin{equation*}
    \fg^*\ni f\mapsto I(f)\in \Prim(U(\fg))
  \end{equation*}
  for nilpotent $\fg$ (where ``addition'' on the right-hand side corresponds to tensoring representations). The same map (under the same hypotheses) is also compatible with respect to ``taking inverses'': according to \cite[Lemme 8.1]{duflo-nil}, for nilpotent $\fg$ we have
  \begin{equation*}
    I(-f) = S(I(f)),
  \end{equation*}
  where $S$ is the antipode of the enveloping algebra $U:=U(\fg)$. If $I$ is the kernel of a $U$-representation $\rho$ then $S(I)$ is that of the dual $\rho^*$: the representation-theoretic analogue of an ``inverse''. 
\end{remark}

\pf{th:nil}
\begin{th:nil}
  
  The classification of the primitive ideals of $U:=U(\fg)$ via coadjoint orbits outlined in \cite[\S 6.1.5]{dixenv} and recalled in \Cref{con:primsolv} goes through. By \Cref{le:tensnil} the composite map
  \begin{equation*}
    \fg^*\ni f\mapsto I(f)\mapsto g_{I(f)}\in \cC(\fg)
  \end{equation*}
  is additive, and since it sends $0\in\fg^*$ to the trivial element it must in fact be a group morphism. Since on the other hand it also factors through the orbit space $\fg^*/\cA$ for the action of the algebraic adjoint group $\cA$ of $\fg$ (\Cref{con:primsolv}), it must descend to a group morphism
  \begin{equation}\label{eq:gacg}
    \fg^*_{\cA}\to \cC(\fg)
  \end{equation}
  from the group (also vector space) of coinvariants in $\fg^*$ under the coadjoint action. That morphism, composed with \Cref{eq:canlie}, is nothing but the usual identification
  \begin{equation}\label{eq:dualinv}
    \fg^*_{\cA}\cong \left(\fg^{\cA}\right)^*\cong \fz^*,\quad \fz:=\text{the center of }\fg,
  \end{equation}
  with the dual to the vector space $\fg^{\cA}\cong \fz$ of $\cA$-{\it invariants} in $\fg$ (i.e. the latter's center). Since \Cref{eq:gacg} is surjective and its composition with \Cref{eq:canlie} is the isomorphism \Cref{eq:dualinv}, \Cref{eq:canlie} itself must be an isomorphism.
\end{th:nil}

\subsection{Solvable}\label{subse:solv}

The following result supersedes \Cref{th:nil}, but that earlier argument is useful to have as a reference. 

\begin{theorem}\label{th:solv}
  Let $\fg$ be a finite-dimensional solvable Lie algebra over the algebraically closed field $\Bbbk$ of characteristic zero.

  The canonical map \Cref{eq:canlie} is an isomorphism.
\end{theorem}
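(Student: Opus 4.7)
The plan is to mimic the proof of \Cref{th:nil} nearly verbatim, replacing $\Ind$ by $\widetilde{\Ind}$ throughout and invoking the solvable (rather than merely nilpotent) ingredients behind \Cref{con:primsolv}. Since every primitive ideal of $U(\fg)$ arises as $I(f) = \ker \widetilde{\Ind}_{\fh}^{\fg}(f)$ for some $f \in \fg^*$ and some polarization $\fh$ (\Cref{con:primsolv}), and since by \Cref{res:primenough} \Cref{item:12} the generator $g_{I(f)} \in \cC(\fg)$ depends only on $I(f)$ and thus only on the $\cA$-orbit of $f$, the task reduces to showing that $f \mapsto g_{I(f)}$ is additive and that its composition with $\cat{can}$ recovers the canonical identification $\fg^*_{\cA} \cong \fz^*$. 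Surjectivity of $\cat{can}$ itself is already supplied by \Cref{le:cansurj}.

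The crux is the solvable analog of \Cref{le:tensnil}: for $f, f' \in \fg^*$ with polarizations $\fh, \fh'$,
\begin{equation*}
\ker\bigl(\widetilde{\Ind}_{\fh}^{\fg}(f) \otimes \widetilde{\Ind}_{\fh'}^{\fg}(f')\bigr) \le I(f + f').
\end{equation*}
The same diagonal argument should go through: $\fh \oplus \fh' \le \fg \oplus \fg$ is a polarization of $(f, f') \in (\fg \oplus \fg)^*$ (the subordination condition and the dimension count both reduce to those for $\fh$ and $\fh'$ separately), and the relevant trace twists aggregate linearly,
\begin{equation*}
\theta_{\fg \oplus \fg,\, \fh \oplus \fh'}(x, y) = \theta_{\fg, \fh}(x) + \theta_{\fg, \fh'}(y),
\end{equation*}
because $(\fg \oplus \fg)/(\fh \oplus \fh') \cong \fg/\fh \oplus \fg/\fh'$ splits the adjoint action of the diagonal. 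This yields the external-tensor identification
\begin{equation*}
\widetilde{\Ind}_{\fh}^{\fg}(f) \otimes \widetilde{\Ind}_{\fh'}^{\fg}(f') \cong \widetilde{\Ind}_{\fh \oplus \fh'}^{\fg \oplus \fg}(f + f'),
\end{equation*}
which becomes the internal tensor product upon restriction to the diagonal $\fg \hookrightarrow \fg \oplus \fg$. \Cref{le:indres} then bounds the kernel of that diagonal restriction by the kernel of an induced representation from $\fh \cap \fh' \le \fg$, and one concludes by invoking the solvable version of Dixmier's subordination lemma 6.4.3 for twisted induction, available in \cite[Chapter~6]{dixenv}, in the role played by its nilpotent counterpart inside the proof of \Cref{le:tensnil}.

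With additivity in hand, the closing paragraph of the proof of \Cref{th:nil} transfers without alteration: the map $\fg^* \to \cC(\fg)$ factors through $\fg^*/\cA$, its composition with $\cat{can}$ is the canonical $\fg^*_{\cA} \cong (\fg^{\cA})^* \cong \fz^*$, and this together with surjectivity of $\fg^* \to \cC(\fg)$ forces $\cat{can}$ itself to be an isomorphism. I expect the main obstacle to be the bookkeeping around the trace twists when passing through the diagonal: in general $\theta_{\fg, \fh}|_{\fh \cap \fh'} + \theta_{\fg, \fh'}|_{\fh \cap \fh'}$ will not coincide with $\theta_{\fg, \fh \cap \fh'}$, so the representation appearing after diagonal restriction is not literally $\widetilde{\Ind}_{\fh \cap \fh'}^{\fg}\bigl((f+f')|_{\fh \cap \fh'}\bigr)$ but a character-twist of it, and one must either invoke a subordination theorem sufficiently flexible to absorb such extra character twists, or tensor both sides by a compensating one-dimensional $\fg$-representation before applying the subordination bound.
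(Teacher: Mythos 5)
You have correctly located the obstacle but substantially underestimated it: it is not bookkeeping, and it is where essentially all of the work in the paper's proof actually lives. The claimed inclusion $\ker\bigl(\widetilde{\Ind}_{\fh}^{\fg}(f)\otimes\widetilde{\Ind}_{\fh'}^{\fg}(f')\bigr)\le I(f+f')$ is not available for general solvable $\fg$. Running your diagonal argument carefully (as the paper does in \Cref{le:indrestw}, via \Cref{le:twtens}) only yields
\begin{equation*}
  \ker\left(\rho_f\otimes\rho_{f'}\right)\ \le\ \ker\left(\rho_{f+f'}\otimes\lambda\right)
\end{equation*}
for \emph{some} character $\lambda\in\fg^*$ annihilating the nilradical $\fn$: the discrepancy $\theta_{\fg,\fh}|_{\fh\cap\fh'}+\theta_{\fg,\fh'}|_{\fh\cap\fh'}-\theta_{\fg,\fh\cap\fh'}$ you flag is genuinely nonzero in the non-nilpotent case (already when $\fh=\fh'$ it equals $\theta_{\fg,\fh}|_{\fh}$), and your second escape route --- tensoring by a compensating one-dimensional representation --- produces exactly this weaker statement, not additivity. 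Your first escape route (a subordination theorem that ``absorbs'' the twist) does not exist; Dixmier's Lemma 6.4.2 controls $\ker\widetilde{\Ind}_{\fh}^{\fg}(f)$ for $\fh$ subordinate to $f$, with the twist $\theta$ prescribed, not arbitrary.

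Consequently the map $f\mapsto g_{I(f)}$ is only additive up to the unknown factors $g_\lambda$, $\lambda\in\fn^\perp$, and the closing paragraph of the proof of \Cref{th:nil} does not transfer. The paper's proof must therefore first reduce the theorem (in \Cref{pr:solvindstep}, assuming the result in lower dimensions) to showing $g_\lambda=1$ in $\cC(\fg)$ for all $\lambda\in\fn^\perp$, and then establish that triviality by an induction on $\dim\fg$ with a case analysis: one splits off a one-dimensional ideal $\Bbbk z\le\ker\lambda$, finds $x$ with $[x,z]=z$, and either quotients onto the non-abelian two-dimensional Lie algebra --- whose enveloping algebra has a faithful simple module, so that \Cref{pr:faiths} kills $g_\lambda$ --- or derives a contradiction with $\lambda\in\fn^\perp$. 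None of this is present in, or follows from, your outline; supplying it is the actual content of the theorem beyond the nilpotent case.
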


Once more, some preparatory remarks are necessary.

\begin{lemma}\label{le:shift}
  Let
  \begin{itemize}
  \item $\fg$ be a finite-dimensional solvable Lie algebra over the algebraically closed field $\Bbbk$;
  \item $f,\lambda\in \fg^*$ with $\lambda$ annihilating $[\fg,\fg]$;
  \item and $\rho_f$ and $\rho_{f+\lambda}$ irreducible representations as in \Cref{not:irrif}. 
  \end{itemize}
  We then have
  \begin{equation}\label{eq:rrl}
    \rho_{f+\lambda}\preceq \rho_f\otimes\lambda,
  \end{equation}
  where $\lambda:\fg\to \Bbbk$ is regarded as a 1-dimensional representation.
\end{lemma}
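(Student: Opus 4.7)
\textbf{Proof plan for \Cref{le:shift}.} The plan is to construct an explicit weak equivalence $\rho_{f+\lambda}\approx \rho_f\otimes \lambda$, from which \Cref{eq:rrl} is immediate. The main technical input is a ``tensor identity'' (or projection formula) of the form
\begin{equation*}
  \Ind_{\fh}^{\fg}(V)\otimes N\cong \Ind_{\fh}^{\fg}\bigl(V\otimes N|_{\fh}\bigr),
\end{equation*}
valid for any $\fh$-module $V$ and $\fg$-module $N$, with the isomorphism being Sweedler-coproduct gymnastics: $u\otimes v\otimes n\mapsto u_{(1)}\otimes\bigl(v\otimes S(u_{(2)})n\bigr)$. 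The argument, although standard for Hopf-subalgebra inclusions, should be recorded carefully in the setting $U(\fh)\le U(\fg)$.

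First I would choose a polarization $\fh\le \fg$ for $f$ and observe that it is simultaneously a polarization for $f+\lambda$: since $\lambda$ kills $[\fg,\fg]\supseteq [\fh,\fh]$, the functional $f+\lambda$ remains $\fh$-subordinate; and since $\lambda([x,y])=0$ for all $x,y\in\fg$, we have $\fg^{f+\lambda}=\fg^f$, so the dimension condition in \Cref{con:primsolv} is automatic. In particular both primitive ideals $I(f)$ and $I(f+\lambda)$ are the kernels of twisted induced representations from the same subalgebra $\fh$.

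Next I would combine the tensor identity with the definition of twisted induction (\Cref{not:induced}) to identify
\begin{equation*}
  \widetilde{\Ind}_{\fh}^{\fg}(f)\otimes \lambda
  \;=\;
  \Ind_{\fh}^{\fg}\bigl(f+\theta_{\fg,\fh}\bigr)\otimes \lambda
  \;\cong\;
  \Ind_{\fh}^{\fg}\bigl(f+\lambda|_{\fh}+\theta_{\fg,\fh}\bigr)
  \;=\;
  \widetilde{\Ind}_{\fh}^{\fg}(f+\lambda);
\end{equation*}
the one-dimensionality of $\lambda$ makes the tensor identity collapse to exactly this linear shift in the character being induced. Taking kernels yields $\ker\bigl(\widetilde{\Ind}_{\fh}^{\fg}(f)\otimes \lambda\bigr)=I(f+\lambda)$.

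Finally I would transfer this equality of kernels from the twisted-induced representations to the irreducibles $\rho_f,\rho_{f+\lambda}$: by definition $\rho_f\approx \widetilde{\Ind}_{\fh}^{\fg}(f)$ and $\rho_{f+\lambda}\approx \widetilde{\Ind}_{\fh}^{\fg}(f+\lambda)$, and weak equivalence is preserved under tensoring with a fixed representation by \Cref{pr:opscont}\Cref{item:8}, so
\begin{equation*}
  \rho_f\otimes\lambda\;\approx\;\widetilde{\Ind}_{\fh}^{\fg}(f)\otimes\lambda\;\cong\;\widetilde{\Ind}_{\fh}^{\fg}(f+\lambda)\;\approx\;\rho_{f+\lambda},
\end{equation*}
which gives the stronger conclusion $\rho_{f+\lambda}\approx \rho_f\otimes\lambda$ (hence in particular \Cref{eq:rrl}). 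The main obstacle I anticipate is verifying the tensor identity cleanly in the enveloping-algebra setting and pinning down the bookkeeping for the twisting $\theta_{\fg,\fh}$, since both the ``$\widetilde{}$'' and the $\lambda$-tensoring alter the character being induced and their interaction must be kept transparent.
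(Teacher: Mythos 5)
Your proposal is correct and follows essentially the same route as the paper: choose a polarization $\fh$ of $f$ and use the push--pull identity $\Ind_{\fh}^{\fg}(W)\otimes V\cong \Ind_{\fh}^{\fg}(W\otimes V|_{\fh})$ (the paper's \Cref{le:hopfpp}) to absorb $\lambda$ into the character being induced. The only divergence is at the last step: you note that $\fh$ is in fact a polarization for $f+\lambda$ as well (the alternating forms $f([\cdot,\cdot])$ and $(f+\lambda)([\cdot,\cdot])$ coincide since $\lambda$ kills $[\fg,\fg]$), which legitimately upgrades the conclusion to the weak equivalence $\rho_{f+\lambda}\approx\rho_f\otimes\lambda$, whereas the paper only uses that $\fh$ is subordinate to $f+\lambda$ and invokes \cite[Lemma 6.4.2]{dixenv} for the one-sided inclusion that \Cref{eq:rrl} requires.
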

\begin{proof}
  The claim is that the kernel $I(f+\lambda)$ of $\rho_{f+\lambda}$ contains that of the right-hand side of \Cref{eq:rrl}.

  Let $\fh\le \fg$ be a polarization for $f$, so that
  \begin{equation*}
    I(f)=\ker\widetilde{\Ind}_{\fh}^{\fg}(f) = \ker\Ind_{\fh}^{\fg}(f\otimes \theta_{\fg,\fh})
  \end{equation*}
  (see \Cref{con:primsolv}). The same Lie algebra $\fh$ is then also subordinate to $f+\lambda$ (because $\lambda$ by assumption vanishes on $[\fg,\fg]\ge [\fh,\fh]$), so by \cite[Lemma 6.4.2]{dixenv} we have
  \begin{equation}\label{eq:indindi}
    \ker\Ind_{\fh}^{\fg}(f\otimes\theta_{\fg,\fh}\otimes\lambda) = \ker\widetilde{\Ind}_{\fh}^{\fg}(f+\lambda)\le I(f+\lambda)
  \end{equation}
  Now, because the $\fh$-representation $\lambda$ is restricted from $\fg$, the push-pull formula for induction/restriction of Hopf-algebra modules shows that the leftmost induced representation in \Cref{eq:indindi} is
  \begin{equation*}
    \Ind_{\fh}^{\fg}(f\otimes\theta_{\fg,\fh})\otimes\lambda:
  \end{equation*}
  Apply \Cref{le:hopfpp} with
  \begin{itemize}
  \item $H=U(\fg)$ and $H'=U(\fh)$;
  \item with modules/representations $W=f\otimes\theta_{\fg,\fh}$ and $V=\lambda$.
  \end{itemize}
  We thus have
  \begin{equation*}
    \rho_{f+\lambda}\preceq \Ind_{\fh}^{\fg}(f\otimes\theta_{\fg,\fh})\otimes\lambda,
  \end{equation*}
  hence \Cref{eq:rrl}, via \Cref{pr:opscont} \Cref{item:8}, since the left-hand tensorands have the same kernel.
\end{proof}

\begin{lemma}\label{le:hopfpp}
  Let $H'\le H$ be an inclusion of Hopf algebras over an arbitrary field, $W$ a left $H'$-module and $V$ a left $H$-module. We then have an isomorphism
  \begin{equation*}
    H\otimes_{H'}(W\otimes V)\cong \left(H\otimes_{H'}W\right)\otimes V
  \end{equation*}
  of left $H$-modules.
\end{lemma}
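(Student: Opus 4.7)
The plan is to exhibit mutually inverse $H$-linear maps between the two sides, built out of the comultiplication and antipode of $H$ in a manner that mimics the familiar push-pull formula for group algebras (where for finite groups one has $(g\otimes w)\otimes v\leftrightarrow g\otimes(w\otimes g^{-1}v)$). Denote comultiplication in Sweedler notation by $\Delta(h)=\sum h_{(1)}\otimes h_{(2)}$, and recall that $H'\le H$ being a sub-Hopf-algebra means $\Delta$, $S$ and $\varepsilon$ restrict to $H'$; the $H'$-action on $W\otimes V$ implicit on the left-hand side is the diagonal one $h'\cdot(w\otimes v)=\sum h'_{(1)}w\otimes h'_{(2)}v$ (using that $V$ is an $H$-module, hence an $H'$-module by restriction).

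Concretely, I would define
\begin{equation*}
  \phi:H\otimes_{H'}(W\otimes V)\to (H\otimes_{H'}W)\otimes V,\quad h\otimes(w\otimes v)\mapsto \sum (h_{(1)}\otimes w)\otimes h_{(2)}v
\end{equation*}
and
\begin{equation*}
  \psi:(H\otimes_{H'}W)\otimes V\to H\otimes_{H'}(W\otimes V),\quad (h\otimes w)\otimes v\mapsto \sum h_{(1)}\otimes(w\otimes S(h_{(2)})v).
\end{equation*}
Well-definedness of $\phi$ with respect to the $H'$-balancing is essentially immediate: $\phi(hh'\otimes(w\otimes v))$ and $\phi(h\otimes h'\cdot(w\otimes v))$ both equal $\sum (h_{(1)}h'_{(1)}\otimes w)\otimes h_{(2)}h'_{(2)}v$ after one application of $\Delta(hh')=\Delta(h)\Delta(h')$ and the balancing $h_{(1)}h'_{(1)}\otimes w=h_{(1)}\otimes h'_{(1)}w$ inside $H\otimes_{H'}W$.

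That $\phi\psi$ and $\psi\phi$ are identities is a two-line computation using coassociativity and the antipode axiom $\sum h_{(1)}S(h_{(2)})=\varepsilon(h)1=\sum S(h_{(1)})h_{(2)}$, together with $\sum h_{(1)}\varepsilon(h_{(2)})=h$: for instance,
\begin{equation*}
  \phi\psi((h\otimes w)\otimes v)=\sum(h_{(1)}\otimes w)\otimes h_{(2)}S(h_{(3)})v=(h\otimes w)\otimes v.
\end{equation*}
The $H$-linearity of $\phi$ (where $H$ acts on the codomain diagonally via $\Delta$) is likewise a direct consequence of the multiplicativity of $\Delta$.

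The one genuine verification is that $\psi$ respects the $H'$-balancing over $H\otimes_{H'}W$, i.e. that the two expressions
\begin{equation*}
  \sum h_{(1)}h'_{(1)}\otimes\bigl(w\otimes S(h'_{(2)})S(h_{(2)})v\bigr)
  \quad\text{and}\quad
  \sum h_{(1)}\otimes\bigl(h'w\otimes S(h_{(2)})v\bigr)
\end{equation*}
agree in $H\otimes_{H'}(W\otimes V)$ for $h'\in H'$. I expect this to be the only slightly non-trivial step, and my plan is to push the left factor $h'_{(1)}$ across the balancing, use coassociativity to split $\Delta(h')$ three-fold, and then collapse the middle pair $h'_{(2)}S(h'_{(3)})=\varepsilon(h'_{(2)})1$ via the antipode axiom, leaving exactly the right-hand expression. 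Once this balancing check is in hand, everything else is formal.
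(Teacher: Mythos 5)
Your proposal is correct and uses exactly the maps the paper writes down ($h\otimes w\otimes v\mapsto h_{(1)}\otimes w\otimes h_{(2)}v$ and its antipode-twisted inverse), so the approach is essentially identical; the paper simply leaves to the reader the verifications (balancing for $\psi$, the antipode cancellations) that you carry out explicitly and correctly.
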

\begin{proof}
  Using {\it Sweedler notation} $x\mapsto x_1\otimes x_2$ for Hopf algebra comultiplications (\cite[Notation 1.4.2]{mont}) and `$S$' for antipodes, we leave it to the reader to check that
  \begin{equation*}
    \begin{tikzpicture}[auto,baseline=(current  bounding  box.center)]
      \path[anchor=base] 
      (0,0) node (l) {$H\otimes_{H'}(W\otimes V)$}
      +(8,0) node (r) {$\left(H\otimes_{H'}W\right)\otimes V$}
      ;
      \draw[->] (l) to[bend left=6] node[pos=.5,auto] {$\scriptstyle h\otimes w\otimes v\longmapsto h_1\otimes w\otimes h_2v$} (r);
      \draw[->] (r) to[bend left=6] node[pos=.5,auto] {$\scriptstyle h_1\otimes w\otimes S(h_2)v\longmapsfrom h\otimes w\otimes v$} (l);
    \end{tikzpicture}
  \end{equation*}
  are mutually inverse module morphisms.
\end{proof}

Note, incidentally, the following consequence of \Cref{le:hopfpp} on the relation between $\Ind$ and $\widetilde{\Ind}$. The statement refers to the {\it nilradical} $\fn\le \fg$, i.e. the largest nilpotent ideal of $\fg$ (discussed e.g. in \cite[Proposition 1.4.9]{dixenv}). When $\fg$ is solvable the nilradical contains the derived ideal $[\fg,\fg]$ \cite[Corollary V.5.3]{ser-lalg}.

\begin{lemma}\label{le:twtens}
  
  Let $\fh\le \fg$ be an inclusion of finite-dimensional solvable Lie algebras over a field $\Bbbk$ of characteristic zero and $\rho:\fh\to \End(W)$ an $\fh$-representation.
  \begin{enumerate}[(1)]
  \item\label{item:1} If $\sigma\in\fh^*$ is a functional vanishing on $\fh\cap [\fg,\fg]$ then
    \begin{equation*}
      \widetilde{\Ind}_{\fh}^{\fg}(\rho\otimes\sigma) \cong \Ind_{\fh}^{\fg}(\rho)\otimes \lambda
    \end{equation*}
    for some functional $\lambda\in \fg^*$ annihilating $[\fg,\fg]+(\fn\cap \ker \sigma)$, where $\fn:=\fn(\fg)$ is the nilradical.
  \item\label{item:22} In particular,
    \begin{equation*}
      \widetilde{\Ind}_{\fh}^{\fg}(\rho) \cong \Ind_{\fh}^{\fg}(\rho)\otimes \lambda
    \end{equation*}
    for some $\lambda\in \fg^*$ annihilating the nilradical $\fn\trianglelefteq \fg$.
  \end{enumerate}
\end{lemma}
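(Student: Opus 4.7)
The plan is to unwind the definition of twisted induction and then run the push--pull identity of \Cref{le:hopfpp} in reverse. By definition,
\[
\widetilde{\Ind}_{\fh}^{\fg}(\rho\otimes\sigma) = \Ind_{\fh}^{\fg}(\rho\otimes\sigma\otimes\theta_{\fg,\fh}),
\]
where both $\sigma$ and $\theta_{\fg,\fh}$ are $1$-dimensional $\fh$-representations. If I can produce a functional $\lambda\in\fg^*$ annihilating $[\fg,\fg]$ (so that $\lambda$ is itself a $1$-dimensional $\fg$-representation) whose restriction to $\fh$ equals $\sigma+\theta_{\fg,\fh}$, then \Cref{le:hopfpp}, applied with the $\fh$-module $\rho$ and the $\fg$-module $\lambda$, yields
\[
\widetilde{\Ind}_{\fh}^{\fg}(\rho\otimes\sigma) \cong \Ind_{\fh}^{\fg}(\rho)\otimes\lambda.
\]
So the whole question reduces to constructing such a $\lambda$ with the additional vanishing on $\fn\cap\ker\sigma$ demanded by part (1).

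The $\lambda$ I seek is produced by an elementary extension-of-functionals argument. I will prescribe $\lambda$ on the subspace $\fh + [\fg,\fg] + (\fn\cap\ker\sigma)\le\fg$ (namely $\sigma+\theta_{\fg,\fh}$ on $\fh$ and $0$ on the other two summands) and then extend arbitrarily to $\fg$. For this prescription to be well-defined, I need consistency on the pairwise intersections with $\fh$: $\sigma+\theta_{\fg,\fh}$ must vanish on $\fh\cap[\fg,\fg]$ and on $\fh\cap\fn\cap\ker\sigma$. The contributions from $\sigma$ are controlled by hypothesis on $\fh\cap[\fg,\fg]$ and tautologically on $\ker\sigma$.

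The crux is therefore to show that $\theta_{\fg,\fh}$ vanishes on $\fh\cap\fn$. I will use the standard fact that every $ad(x)\in\End(\fg)$ with $x\in\fn$ is nilpotent, obtained by iterating $[\fn,\fg_k]\subseteq\fg_{k+1}$ along the descending chain $\fg_0=\fg$, $\fg_{k+1}=[\fn,\fg_k]$, which lands inside $\fn$ from $k=1$ onward and thereafter mirrors the lower central series of $\fn$, hence terminates. For $x\in\fh\cap\fn$ the nilpotent operator $ad(x)$ preserves $\fh$, so the induced operator on $\fg/\fh$ is again nilpotent and therefore of trace zero, i.e.\ $\theta_{\fg,\fh}(x)=0$. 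Combined with the solvable inclusion $[\fg,\fg]\subseteq\fn$ (so $\fh\cap[\fg,\fg]\subseteq\fh\cap\fn$), this supplies both needed consistency identities, and part (1) follows.

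For part (2) I take $\sigma=0$ and strengthen the demand on $\lambda$ to $\lambda|_{\fn}=0$ (which subsumes $\lambda|_{[\fg,\fg]}=0$). The only consistency to re-verify is on $\fh\cap\fn$, which is again handled by the same vanishing of $\theta_{\fg,\fh}$; the same linear-algebra extension then delivers the desired $\lambda$. The principal obstacle throughout is conceptual rather than technical: recognizing that \Cref{le:hopfpp} converts the problem into extending a character from $\fh$ to $\fg$, and then isolating the nilpotency of $ad(\fn)$ on $\fg$ as the precise fact that makes $\theta_{\fg,\fh}$ compatible with such an extension.
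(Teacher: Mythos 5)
Your proof is correct and follows essentially the same route as the paper's: unwind $\widetilde{\Ind}$ via $\theta_{\fg,\fh}$, observe that $\theta_{\fg,\fh}$ vanishes on $\fh\cap\fn$ because $ad(x)$ is nilpotent for $x\in\fn$, extend $\sigma+\theta_{\fg,\fh}$ to a suitable $\lambda\in\fg^*$, and apply \Cref{le:hopfpp}. Your extra care in part (2) --- noting that the $\sigma=0$ case of (1) only kills $[\fg,\fg]+(\fn\cap\fh)$ and that one must separately demand $\lambda|_{\fn}=0$ --- is a small but genuine refinement of the paper's ``simply take $\sigma=0$''.
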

\begin{proof}
  Part \Cref{item:22} is indeed an instance of \Cref{item:1}: simply take $\sigma=0$. We thus focus on \Cref{item:1}.

  By definition (\Cref{not:induced}), we have
  \begin{equation*}
    \widetilde{\Ind}_{\fh}^{\fg}(\rho\otimes\sigma) \cong \Ind_{\fh}^{\fg}(\rho\otimes\theta_{\fg,\fh}\otimes\sigma). 
  \end{equation*}
  All eigenvalues of $ad(x)$ for $x\in \fn$ vanish, so the trace defining $\theta_{\fg,\fh}\in \fh^*$ vanishes on $\fh\cap \fn$. It follows that $\theta_{\fg,\fh}$ and $\sigma$ both vanish on
  \begin{equation*}
    \fh \cap \left([\fg,\fg]+(\fn\cap\ker\sigma)\right)
    =
    (\fh\cap[\fg,\fg]) + (\fn\cap\ker\sigma),
  \end{equation*}
  so $\theta_{\fg,\fh}+\lambda$ can be extended to a functional $\lambda\in\fg^*$ vanishing on the target space $[\fg,\fg]+(\fn\cap\ker\sigma)$. In particular $\lambda$ can be regarded as a 1-dimensional $\fg$-representation, and the conclusion then follows from the push-pull formula again: \Cref{le:hopfpp} with
  \begin{equation*}
    H=U(\fg),\ H'=U(\fh),\ V=\text{the 1-dimensional $\fg$-module attached to }\lambda.
  \end{equation*}
\end{proof}

Solvable analogues of \Cref{le:resnil,le:tensnil}, needed below:

\begin{lemma}\label{le:indrestw}
  Let $\fg$ be a finite-dimensional solvable Lie algebra over an algebraically closed field of characteristic zero.
  \begin{enumerate}[(1)]
  \item\label{item:20} Let $\fg'\le \fg$ be a Lie subalgebra, $f\in \fg^*$ and $f':=f|_{\fg}$. Recalling \Cref{not:irrif}, we have
    \begin{equation*}
      \ker \rho_f \cap U(\fg')\le \ker\left(\rho_{f'}\otimes\lambda\right)
    \end{equation*}
    for some 1-dimensional representation $\lambda\in (\fg')^*$ annihilating the intersection $\fg'\cap \fn$ with the nilradical $\fn:=\fn(\fg)\trianglelefteq \fg$. 
  \item\label{item:21} For two functionals $f,f'\in \fg^*$ we have
    \begin{equation*}
      \ker\left(\rho_f\otimes\rho_{f'}\right) \le \ker\left(\rho_{f+f'}\otimes\lambda\right)
    \end{equation*}
    for some 1-dimensional representation $\lambda\in (\fg)^*$ annihilating the nilradical of $\fg$.     
  \end{enumerate}  
\end{lemma}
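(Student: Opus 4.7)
Fix a polarization $\fh\le \fg$ for $f$ and a polarization $\fh'\le \fg'$ for $f'$. The strategy is to peel away each twist via \Cref{le:twtens}\Cref{item:22}, apply the untwisted restriction Lemma \Cref{le:indres}, go from a subordinate subalgebra to a polarization inside $\fg'$, and then reinstate the twist. Explicitly:
\begin{enumerate}[(i)]
\item By \Cref{le:twtens}\Cref{item:22} there are $\mu\in \fg^*$ annihilating $\fn(\fg)$ and $\nu\in (\fg')^*$ annihilating $\fn(\fg')$ with $\widetilde{\Ind}_{\fh}^{\fg}(f)\cong \Ind_{\fh}^{\fg}(f)\otimes\mu$ and $\widetilde{\Ind}_{\fh'}^{\fg'}(f')\cong \Ind_{\fh'}^{\fg'}(f')\otimes \nu$. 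Since $\rho_f$ is weakly equivalent to the twisted induction, its kernel in $U(\fg')$ equals the kernel of $\Ind_{\fh}^{\fg}(f)|_{\fg'}\otimes \mu|_{\fg'}$.
\item \Cref{le:indres} gives $\ker \Ind_{\fh}^{\fg}(f)|_{\fg'}\le \ker \Ind_{\fh\cap \fg'}^{\fg'}(f|_{\fh\cap \fg'})$, and tensoring with the $1$-dimensional $\mu|_{\fg'}$ preserves this inclusion by \Cref{pr:opscont}\Cref{item:8}.
\item Because $\fh\cap \fg'\le \fg'$ is subordinate to $f'$, the solvable analogue of \cite[Lemma 6.4.3]{dixenv} (in the form used inside the proof of \Cref{le:resnil}) yields $\ker \Ind_{\fh\cap \fg'}^{\fg'}(f|_{\fh\cap \fg'})\le \ker \Ind_{\fh'}^{\fg'}(f')$, and again we tensor with $\mu|_{\fg'}$.
\item Finally, invert the isomorphism from step (i) on the $\fg'$ side: $\Ind_{\fh'}^{\fg'}(f')\cong \widetilde{\Ind}_{\fh'}^{\fg'}(f')\otimes(-\nu)$, so that $\Ind_{\fh'}^{\fg'}(f')\otimes\mu|_{\fg'}$ is isomorphic to $\widetilde{\Ind}_{\fh'}^{\fg'}(f')\otimes\lambda$ with $\lambda:=\mu|_{\fg'}-\nu$, and hence has the same kernel as $\rho_{f'}\otimes\lambda$.
\end{enumerate}
Chaining these inclusions gives $\ker \rho_f\cap U(\fg')\le \ker(\rho_{f'}\otimes\lambda)$.

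\textbf{Verifying the annihilator condition.} One needs $\lambda=\mu|_{\fg'}-\nu$ to vanish on $\fg'\cap \fn(\fg)$. The restriction $\mu|_{\fg'}$ kills $\fg'\cap \fn(\fg)$ because $\mu$ kills $\fn(\fg)$. For $\nu$, observe that $\fg'\cap \fn(\fg)$ is a nilpotent ideal of $\fg'$ (stability under bracketing with $\fg'$ is automatic, nilpotence is inherited), hence is contained in the nilradical $\fn(\fg')$ of $\fg'$, which $\nu$ annihilates by construction. Thus $\lambda$ vanishes on $\fg'\cap \fn(\fg)$, as required.

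\textbf{Plan for Part (2).} This will be a direct consequence of part (1) applied to the diagonal embedding $\fg\hookrightarrow \fg\oplus \fg$, mirroring the way \Cref{le:tensnil} is reduced to \Cref{le:resnil}. Let $\tilde f:=f\oplus f'\in (\fg\oplus \fg)^*$; its restriction along the diagonal is $f+f'$. Using the standard push-pull formalism (as in the bullet list in the proof of \Cref{le:tensnil}), $\widetilde{\Ind}_{\fh\oplus \fh'}^{\fg\oplus \fg}(f\oplus f')$ is the external tensor product of $\widetilde{\Ind}_{\fh}^{\fg}(f)$ and $\widetilde{\Ind}_{\fh'}^{\fg}(f')$, so its restriction to the diagonal is the internal tensor product, which is weakly equivalent to $\rho_f\otimes \rho_{f'}$ by \Cref{pr:opscont}\Cref{item:8}. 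Therefore $\ker(\rho_f\otimes \rho_{f'})=\ker \rho_{\tilde f}\cap U(\fg_{\Delta})$, and part (1) bounds this by $\ker(\rho_{f+f'}\otimes \lambda)$ for some $\lambda\in \fg^*$ annihilating $\fg_{\Delta}\cap \fn(\fg\oplus \fg)=\fn(\fg)_{\Delta}$; reading this on $\fg$ itself, $\lambda$ annihilates $\fn(\fg)$.

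\textbf{Main obstacle.} The only subtle point is the bookkeeping of the two 1-dimensional twists introduced in steps (i) and (iv), ensuring both (a) that they combine into a single functional $\lambda$ and (b) that this $\lambda$ kills $\fg'\cap \fn(\fg)$ rather than just some larger subspace; the containment $\fg'\cap \fn(\fg)\subseteq \fn(\fg')$ is what makes the second twist $\nu$ harmless. A minor technical point is that one must be sure the subordinate-to-polarization bound used in step (iii) is available in the solvable (not merely nilpotent) setting; this is the content of \cite[Lemma 6.4.3]{dixenv}, whose hypotheses are met once $\fh\cap\fg'$ is known to be $f'$-subordinate, which is immediate from $f|_{[\fh,\fh]}=0$.
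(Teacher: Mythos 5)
Your overall architecture is the same as the paper's: restrict via \Cref{le:indres}, invoke the subordinate-subalgebra bound at the level of $\fg'$, and absorb the discrepancies between $\Ind$ and $\widetilde{\Ind}$ into a single one-dimensional twist $\lambda$ controlled by \Cref{le:twtens}; part (2) is reduced to part (1) via the diagonal embedding exactly as in the paper. The annihilator bookkeeping you do carry out (in particular the observation $\fg'\cap\fn(\fg)\subseteq\fn(\fg')$, which is also what the paper relies on) is correct.

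There is, however, a genuine gap in step (iii). The inclusion you assert there,
\begin{equation*}
  \ker \Ind_{\fh\cap \fg'}^{\fg'}\bigl(f|_{\fh\cap \fg'}\bigr)\le \ker \Ind_{\fh'}^{\fg'}(f'),
\end{equation*}
compares two \emph{untwisted} inductions, but the solvable version of the subordinate-to-polarization bound is a statement about \emph{twisted} inductions: what is available is $\ker\widetilde{\Ind}_{\fh\cap\fg'}^{\fg'}(f')\le I(f')=\ker\widetilde{\Ind}_{\fh'}^{\fg'}(f')$. Outside the nilpotent case one cannot simply drop the tildes: $\Ind_{\fk}^{\fg'}(f')$ and $\widetilde{\Ind}_{\fk}^{\fg'}(f')$ differ by a twist depending on $\fk$ (namely $\theta_{\fg',\fk}$), and these twists are \emph{different} for $\fk=\fh\cap\fg'$ and $\fk=\fh'$, so the untwisted inclusion does not follow from the cited result --- indeed, tracking exactly this mismatch is the reason the correction $\lambda$ appears in the statement at all. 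The gap is repairable with the tools you already use: each $\theta_{\fg',\fk}$ vanishes on $\fk\cap\fn(\fg')$, hence extends to a functional on $\fg'$ annihilating $\fn(\fg')\supseteq\fg'\cap\fn$, so by one more application of \Cref{le:twtens}\Cref{item:1} the discrepancy is yet another one-dimensional representation killing $\fg'\cap\fn$ that can be folded into your $\lambda$. This is in fact how the paper proceeds: it keeps $\theta_{\fg,\fh}$ attached through the restriction step, rewrites $\Ind_{\fh\cap\fg'}^{\fg'}(f\otimes\theta_{\fg,\fh})$ as $\widetilde{\Ind}_{\fh\cap\fg'}^{\fg'}(f)\otimes\lambda$ in one stroke via \Cref{le:twtens}\Cref{item:1}, and only then applies the subordinate bound to the twisted induction. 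So: same route, but your step (iii) needs the extra twist correction spelled out rather than asserted.
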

\begin{proof}
  The arguments are minor adaptations of those respectively employed in the proofs of \Cref{le:resnil,le:tensnil}. 
  \begin{enumerate}[(1)]
  \item Following the same line of reasoning as in \Cref{le:resnil}: choose a polarization $\fh\le \fg$ for $f$, giving
    \begin{equation*}
      I(f) = \ker\Ind_{\fh}^{\fg}(f\otimes\theta_{\fg,\fh}).
    \end{equation*}
    Then, by \Cref{le:indres} again,
    \begin{equation}\label{eq:iukerker}
      I(f)\cap U(\fg')
      \le
      \ker \Ind_{\fh\cap \fg'}^{\fg'}(f\otimes\theta_{\fg,\fh})
      =
      \ker \widetilde{\Ind}_{\fh\cap \fg'}^{\fg'}(f\otimes\theta_{\fg,\fh}\otimes\theta_{\fg',\fh\cap \fg'}^*),
    \end{equation}
    where as usual, the `$*$' superscript denotes the dual representation.

    Now note that both $\theta_{\fg,\fh}|_{\fh\cap \fg'}$ and $\theta_{\fg',\fh\cap \fg'}$ vanish on $\fh\cap \fg'\cap \fn$, so they extend to functionals on $\fg'$ vanishing on $[\fg',\fg']+(\fg'\cap \fn)$. But now, by \Cref{le:twtens} \Cref{item:1}, the rightmost representation in \Cref{eq:iukerker} is
    \begin{equation*}
      \widetilde{\Ind}_{\fh\cap \fg'}^{\fg'}(f\otimes\theta_{\fg,\fh}\otimes\theta_{\fg',\fh\cap \fg'}^*)
      \cong
      \widetilde{\Ind}_{\fh\cap \fg'}^{\fg'}(f)\otimes\lambda
    \end{equation*}
    for some $\lambda\in (\fg')^*$ as in the statement.
  \item This follows from part \Cref{item:20} the same way \Cref{le:tensnil} follows from \Cref{le:resnil}, taking into account the fact that for the diagonal embedding $\fg\le \fg\oplus \fg$ the intersection
    \begin{equation*}
      \fg\cap \fn(\fg\oplus \fg)
    \end{equation*}
    is nothing but the nilradical $\fn(\fg)$. 
  \end{enumerate}
  This concludes the proof of the two claims.
\end{proof}

We will prove \Cref{th:solv} by induction on the dimension of $\fg$, with the following result carrying the brunt of the iterative load. 

\begin{proposition}\label{pr:solvindstep}
  Let $\fg$ be a finite-dimensional solvable Lie algebra over the algebraically closed field $\Bbbk$, and assume the conclusion of \Cref{th:solv} holds for all solvable Lie algebras of smaller dimension.

  The following conditions are equivalent.
  \begin{enumerate}[(a)]
  \item\label{item:15} \Cref{th:solv} holds for $\fg$.
  \item\label{item:16} The canonical map \Cref{eq:canlie} is injective.   
  \item\label{item:17} For every 1-dimensional $\fg$-representation $\lambda\in \fg^*$ annihilating the center $\fz:=\fz(\fg)$ the generator $g_{\lambda}\in \cC(\fg)$ is trivial.
  \item\label{item:18} Same as \Cref{item:17}, but only for those 1-dimensional representations $\lambda\in \fg^*$ annihilating the nilradical $\fn\trianglelefteq \fg$.
  \end{enumerate}
\end{proposition}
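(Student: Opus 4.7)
I would prove the equivalence via the cycle $(a)\Rightarrow(b)\Rightarrow(c)\Rightarrow(d)\Rightarrow(a)$. The first three arrows are quick. $(a)\Rightarrow(b)$ is automatic. For $(b)\Rightarrow(c)$, a $1$-dimensional $\lambda\in\fg^*$ annihilating $\fz$ has zero central character, so the presumed injectivity of \Cref{eq:canlie} forces $g_\lambda=e$. And $(c)\Rightarrow(d)$ is the containment $\fn^\perp\subseteq\fz^\perp$, which holds because the center of a solvable Lie algebra is an abelian ideal and so lies in the nilradical.

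The substantive direction is $(d)\Rightarrow(b)$. The plan is to introduce $\phi:\fg^*\to\cC(\fg)$, $\phi(f):=g_{\rho_f}$, well defined by \Cref{res:primenough} \Cref{item:12} since the generator depends only on the primitive kernel $I(f)$. \Cref{le:indrestw} \Cref{item:21} yields $g_{\rho_f}g_{\rho_{f'}}=g_{\rho_{f+f'}}g_\mu$ for some $\mu\in\fn^\perp$, so hypothesis (d) trivialises $g_\mu$ and upgrades $\phi$ to a group homomorphism. Because every primitive ideal of $U(\fg)$ has the form $I(f)$ by the Dixmier classification \Cref{con:primsolv}, the generators of $\cC(\fg)$ already lie in the image of $\phi$, making it surjective.

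The heart of the matter is then to pin down $\ker\phi$ as exactly $\fz^\perp$. Orbit invariance of the Dixmier construction gives $\phi(\cA f-f)=e$ for every $\cA\in\cA$ and $f\in\fg^*$; the rescaling identity $c(\cA f-f)=\cA(cf)-(cf)$ shows that the additive subgroup these differences generate is actually a $\Bbbk$-subspace, equal to the $\Bbbk$-span $\spn(\cA-1)\fg^*$. The standard invariants/coinvariants duality (using that the $\cA$-fixed vectors in $\fg$ are exactly $\fz$) identifies this span with $\fz^\perp$, and $\phi$ thus descends to a surjection $\bar\phi:\fz^*\twoheadrightarrow\cC(\fg)$. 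One then verifies $\cat{can}\circ\bar\phi=\id_{\fz^*}$ from the fact that the central character of $\rho_f=\widetilde{\Ind}_\fh^\fg(f)$ equals $f|_\fz$: a polarization $\fh$ for $f$ contains the radical $\fg^f\supseteq\fz$, and $\theta_{\fg,\fh}$ vanishes on $\fz$ since $ad(z)=0$ for central $z$. Hence $\bar\phi$ is bijective and $\cat{can}$ is its inverse. The main obstacle in this program is this final identification of $\ker\phi$, where one must synthesise the additivity granted by (d), the $\cA$-equivariance of the Dixmier map, and the linear-algebraic equality $\spn(\cA-1)\fg^*=\fz^\perp$; the inductive hypothesis of the proposition remains available to support a reduction via a proper subalgebra or quotient should the direct route above require reinforcement.
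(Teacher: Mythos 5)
Your proof is correct and follows essentially the same route as the paper: the three easy implications are handled identically, and for the substantive direction you combine \Cref{le:indrestw} \Cref{item:21} with hypothesis \Cref{item:18} to make $f\mapsto g_{I(f)}$ additive, descend to the coinvariants $\fg^*_{\cA}\cong\fz^*$, and observe that composing with \Cref{eq:canlie} gives the identity --- which is exactly how the paper reduces to (and reruns) the argument of \Cref{th:nil}. Your write-up usefully makes explicit two points the paper leaves implicit there, namely that the subgroup generated by the differences $af-f$ is a $\Bbbk$-subspace equal to $\fz^\perp$, and that the central character of $\rho_f$ restricted to $\fz$ is $f|_{\fz}$.
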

\begin{proof}
  That \Cref{item:15} and \Cref{item:16} are equivalent is a general remark (\Cref{le:cansurj}). Naturally, \Cref{item:16} implies \Cref{item:17}, and the latter is formally stronger than \Cref{item:18} (since the center is contained in the nilradical, so {\it fewer} functionals will annihilate it). It thus remains to prove \Cref{item:18} $\Longrightarrow$ \Cref{item:15}, which implication we henceforth focus on.
  
  By \Cref{le:indrestw} \Cref{item:21}, \Cref{le:tensnil} holds with the caveat that one might have to tensor by 1-dimensional representations $\lambda\in \fg^*$ annihilating the nilradical $\fn\trianglelefteq \fg$. Since we are assuming
  \begin{equation*}
    1=g_{\lambda}\in \cC(\fg)\text{ for all such }\lambda,
  \end{equation*}
  we can conclude as in the proof of \Cref{th:nil}: the map
  \begin{equation*}
    \fg^*\ni f\mapsto I(f)\mapsto g_{I(f)}\in \cC(\fg)
  \end{equation*}
  is additive, gives an isomorphism \Cref{eq:dualinv} upon further composition with \Cref{eq:canlie}, and we are done. 
\end{proof}

\pf{th:solv}
\begin{th:solv}
  As announced, we proceed by induction on $\dim\fg$, with the base case(s) being simple exercises. This leaves the induction step which, per \Cref{pr:solvindstep}, amounts to showing that
  \begin{equation*}
    1=g_{\lambda}\in \cC(\fg),\ \forall \lambda\in \fn^{\perp}\le \fg^*
  \end{equation*}
  (where as before, $\fn\trianglelefteq \fg$ is the nilradical).

  Fix such a functional $\lambda\in\fn^{\perp}$ (non-zero, or there is nothing to prove). Because the chain group is functorial for surjections, the induction hypothesis (together with \Cref{pr:solvindstep}) tells us that $\lambda$ cannot annihilate the center of any proper quotient of $\fg$. Since $\ker\lambda$ contains some 1-dimensional ideal
  \begin{equation*}
    \fk:=\Bbbk z\trianglelefteq \fg,
  \end{equation*}
  because we are working over an algebraically closed field (e.g. \cite[1.3.12]{dixenv}). It follows that the quotient $\fg/\fk$ splits as
  \begin{equation*}
    \fg/\fk
    =
    \left(\ker \lambda|_{\fg/\fk}\right)
    \oplus
    \left(\text{image of }\Bbbk x\right)
  \end{equation*}
  for some $x\in \fg$ not annihilated by $\lambda$.

  If $x$ were to commute with $z$ then $\lambda$ would fail to annihilate the nilpotent ideal $\spn\{x,z\}$, contradicting the choice $\lambda\in \fn^{\perp}$. We can thus assume that $[x,z]=z$; because $z$ commutes with $\ker\lambda$ modulo $z$, we have a decomposition
  \begin{equation*}
    \ker\lambda = \Bbbk z\oplus \fy,\ \fy:=C_{\ker\lambda}(x) = \{x'\in \ker\lambda\ |\ [x,x']=0\}. 
  \end{equation*}
  There are now some possibilities to consider:
  \begin{enumerate}[(a)]
  \item {\bf The Lie algebra $\fy$ centralizes $z$.} In this case $\fy$ is an ideal in $\fg$, and the quotient $\fg/\fy$ is (isomorphic to) the non-nilpotent 2-dimensional Lie algebra generated by $x$ and $z$. This is the $\fg_2$ of \cite[Lemma 6.1.2 (i)]{dixenv}, and has faithful irreducible representations.

    It follows from \Cref{pr:faiths} that {\it all} irreducible $(\fg/\fy)$-representations (in particular $\lambda$) are trivial in the chain group, hence also in $\cC(\fg)$ by the above-mentioned chain-group functoriality under quotients.
  \item {\bf $[\fy,z]\ne \{0\}$.} In this case there is some $y\in \fy\le \ker\lambda$ with $[y,z]=z$, and $\lambda$ fails to annihilate the $\fg$-central element $x-y$. This contradicts our assumption that $\lambda\in\fn^{\perp}$, and finishes the proof.  \qedhere
  \end{enumerate}
\end{th:solv}

\subsection{Semisimple}\label{subse:ss}

The main result we address here is

\begin{theorem}\label{th:ss}
  Let $\fg$ be a finite-dimensional, semisimple Lie algebra over the algebraically closed field $\Bbbk$ of characteristic zero.

  The chain group $\cC(\fg)$ is trivial, and hence \Cref{eq:canlie} is an isomorphism.
\end{theorem}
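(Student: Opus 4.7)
The plan is to deduce \Cref{th:ss} directly from \Cref{pr:faiths}. A classical theorem of Dixmier (see e.g.~\cite{dixenv}) asserts that the enveloping algebra $U(\fg)$ of a finite-dimensional complex semisimple Lie algebra is a \emph{primitive ring}: the zero ideal is itself primitive, or equivalently, $U(\fg)$ admits a faithful simple module. Granting this, \Cref{pr:faiths} applies verbatim to yield the triviality of $\cC(\fg)$.

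The second assertion is then automatic: because the center $\fz = \fz(\fg)$ of a semisimple Lie algebra vanishes, so does $\fz^*$, and the canonical morphism \Cref{eq:canlie} is (trivially) an isomorphism between trivial groups. No separate argument is needed to verify it.

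The entire argumentative load is thus outsourced to Dixmier's primitivity theorem, which is the ``hard input''. Its standard proof exhibits a faithful simple $U(\fg)$-module as a suitable simple subquotient of a Verma module $M(\lambda)$ for a sufficiently generic (e.g.~regular dominant) highest weight $\lambda$: genericity is used to ensure that the annihilator of the subquotient in $U(\fg)$ collapses to $\{0\}$. Alternatively, one could invoke Duflo's classification of primitive ideals together with the fact that the trivial coadjoint orbit $\{0\} \subset \fg^*$ corresponds to the zero primitive ideal. Once any such statement is granted, \Cref{th:ss} reduces to a one-line corollary of \Cref{pr:faiths}.
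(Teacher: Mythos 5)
The proposal fails at its very first step: the ``primitivity theorem'' you appeal to does not exist, and in fact $U(\fg)$ is \emph{never} a primitive ring for a nonzero finite-dimensional semisimple $\fg$. As already recalled in \cref{subse:nil}, every irreducible representation $V$ of $U(\fg)$ admits a central character $\chi:Z(U(\fg))\to\Bbbk$ \cite[Proposition 2.6.8]{dixenv}; by the Harish-Chandra isomorphism $Z(U(\fg))$ is a polynomial algebra in $\mathrm{rank}(\fg)\ge 1$ variables, so $\ker\chi\ne\{0\}$ and hence $\{0\}\ne U(\fg)\ker\chi\le\Ann(V)$. Put differently, a primitive $\Bbbk$-algebra whose faithful simple module has commutant $\Bbbk$ must have center reduced to $\Bbbk$, which $U(\fg)$ does not. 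The minimal primitive ideals are precisely the annihilators $J_\lambda$ of the Verma modules $M(\lambda)$ (equivalently, the ideals $U(\fg)\ker\chi_\lambda$ by Duflo's theorem), and these are all nonzero. Your fallback via ``the trivial coadjoint orbit corresponds to the zero primitive ideal'' transplants the orbit method for nilpotent and solvable algebras into the semisimple setting, where it does not apply: no primitive ideal of $U(\fg)$ is zero. The situations where \Cref{pr:faiths} genuinely applies are of a different nature --- the non-abelian two-dimensional solvable algebra, or the infinite-rank algebras $\fsl(\infty)$, $\fo(\infty)$, $\fsp(\infty)$, whose enveloping algebras have small centers.

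So \Cref{pr:faiths} cannot carry the load here and a real argument is required. The paper's route is: every primitive ideal contains some minimal $J_\lambda$, so by \Cref{le:incsame} the chain group is generated by the elements $g_{J_\lambda}$, $\lambda\in\fh^*$; these are Weyl-invariant ($J_{w\lambda}=J_\lambda$), and since $M(\lambda)\otimes M(\lambda')$ contains a highest-weight vector of weight $\lambda+\lambda'-2\delta$ one gets the shifted additivity $g_{J_\lambda}g_{J_{\lambda'}}=g_{J_{\lambda+\lambda'-\delta}}$; a short manipulation using the longest Weyl element then exhibits $\cC(\fg)$ as a quotient of the group of Weyl coinvariants of $\fh^*$, which is trivial because $W$ is generated by nontrivial reflections. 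Your second paragraph (that $\fz^*=0$, so triviality of $\cC(\fg)$ gives the isomorphism) is correct, but it is the only part of the proposal that survives.
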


We use some of the language familiar in the theory of semisimple Lie algebras as covered, say, in \cite{hmph}, \cite[Chapter 1]{dixenv}, etc. In particular:
\begin{itemize}
\item $\fh\le \fg$ will be a {\it Cartan subalgebra} of $\fg$ (\cite[\S 15]{hmph}, \cite[\S 1.9]{dixenv}).
\item This then induces a {\it root-space decomposition} for $\fg$ \cite[\S 8]{hmph}, for which we assume we have chosen a {\it base} $\Delta\subset\fh^*$ \cite[\S 10.1]{hmph}.
\item We denote by $\delta\in \fh^*$ the half-sum of the {\it positive roots} \cite[\S 10.1]{hmph} attached to the choice of $\Delta$. This element is discussed in \cite[\S 13.3]{hmph} as well as \cite[\S 11.1.13]{dixenv}, where it is denoted by the same symbol.
\item Let $\lambda\in \fh^*$. Following \cite[\S 7.1.4]{dixenv} or \cite[\S I.1]{dufl-ss}, we denote by $M(\lambda)$ the {\it Verma module} of highest weight $f-\delta$. For comparison: in \cite[\S 20.3]{hmph} $M(\lambda)$ would rather be denoted by $Z(\lambda-\delta)$.
\item The simple quotient of $M(\lambda)$ is $L(\lambda)$, again following either \cite[\S 7.1.12]{dixenv} or \cite[\S I.1]{dufl-ss}. \cite[\S 20.3]{hmph} would set $L(\lambda)=V(\lambda-\delta)$.
\item We write $W$ for the {\it Weyl group} of $\fg$ (\cite[\S 10.3]{hmph}, \cite[\S 1.10.10]{dixenv}); it is a finite group of linear automorphisms of $\fh^*$, generated by reflections.
\end{itemize}

\pf{th:ss}
\begin{th:ss}
  The notation outlined above is in force throughout. We will also (somewhat abusively) identify representations with their carrier spaces (e.g. by referring to a representation of $U:=U(\fg)$ on $V$ as just plain $V$, regarded as a $U$-module). Finally, recall from \Cref{res:primenough} \Cref{item:12} that we may as well attach generators $g_J$ of the chain group to primitive ideals $J\le U$ (rather than actual representations); we do this below.

  According to \cite[Theorem 8.4.4 (iv)]{dixenv}, the respective kernels $J_{\lambda}$ of $M(\lambda)$, $\lambda\in \fh^*$ are precisely the minimal primitive ideals of $U$. An arbitrary primitive ideal $J$ will thus contain some $J_{\lambda}$, and by \Cref{le:incsame} we have  
  \begin{equation}\label{eq:contsame}
    J\ge J_{\lambda}\Rightarrow g_J = g_{J_{\lambda}}\in \cC(\fg). 
  \end{equation}
  
  \cite[Theorem 8.4.4 (ii)]{dixenv} moreover shows that $J_{\lambda}=J_{\lambda'}$ whenever $\lambda'$ is in the Weyl-group orbit $W\lambda$; the same must be true of chain-group generators then:
  \begin{equation*}
    g_{J_{\lambda}}=g_{J_{\lambda'}},\ \forall \lambda'\in W\lambda. 
  \end{equation*}
  One last observation: the tensor product $M(\lambda)\otimes M(\lambda')$ (for arbitrary $\lambda,\lambda'\in \fh^*$) contains a highest-weight vector of weight $\lambda+\lambda'-2\delta$, so it has a submodule surjecting onto the simple module $L(\lambda+\lambda'-\delta)$. Because the latter's kernel contains $J_{\lambda+\lambda'-\delta}$ (and hence is identified to it in $\cC(\fg)$ by \Cref{eq:contsame}), and writing $g_{\lambda}:=g_{J_{\lambda}}$ for better readability, we have
  \begin{equation}\label{eq:delshitfadd}
    g_{{\lambda}} g_{{\lambda'}} = g_{{\lambda+\lambda'-\delta}},\ \forall \lambda,\lambda'\in \fh^*.
  \end{equation}
  In summary:
  \begin{itemize}
  \item The map
    \begin{equation*}
      \fh^*\ni \lambda\mapsto g_{\lambda}:=g_{J_{\lambda}}\in \cC(\fg)
    \end{equation*}
    is onto;
  \item and invariant under the Weyl-group action:
    \begin{equation}\label{eq:winv}
      g_{w\lambda}=g_{\lambda},\ \forall w\in W,\ \forall \lambda\in \fh^*;
    \end{equation}
  \item and ``$\delta$-shifted-additive'' in the sense of \Cref{eq:delshitfadd}.
  \end{itemize}
  Applying \Cref{eq:delshitfadd} to $w\lambda$ and $w\lambda'$ instead, using \Cref{eq:winv} once and absorbing $\lambda+\lambda'$ into a single $\lambda$, we obtain
  \begin{equation}\label{eq:eqeq}
    g_{\lambda-w^{-1}\delta} = g_{w\lambda-\delta} = g_{\lambda-\delta},\ \forall w\in W,\ \forall \lambda\in \fh^*.
  \end{equation}  
  Because the longest element $w_0\in W$ (\cite[\S 7.2.3]{dixenv}; this is the $\sigma$ of \cite[Exercise 10.9]{hmph}) sends $\delta$ to $-\delta$, the leftmost term of \Cref{eq:eqeq} can be set to $g_{\lambda+\delta}$. But then the right-hand side of \Cref{eq:delshitfadd} also holds with `$+\delta$' in place of `$-\delta$', whereupon the substitutions $\lambda\mapsto \lambda-\delta$ and $\lambda'\mapsto \lambda'-\delta$ yield
  \begin{equation*}
    g_{\lambda-\delta}g_{\lambda'-\delta} = g_{\lambda+\lambda'-\delta},\ \forall \lambda,\lambda'\in \fh^*.  
  \end{equation*}
  Together with \Cref{eq:eqeq}, this finally tells us that
  \begin{equation*}
    \fh^*\ni \lambda\mapsto g_{\lambda-\delta}\in \cC(\fg)
  \end{equation*}
  descends to a surjection from the group
  \begin{equation*}
    \fh^*/\langle w\lambda-\lambda,\ w\in W,\ \lambda\in \fh^*\rangle.
  \end{equation*}
  of coinvariants under the action of the Weyl group. That group of coinvariants is of course trivial, because the representation of $W$ on $\fh^*$ contains no copies of the trivial representation (obviously: by definition, $W$ is generated by non-trivial reflections).
\end{th:ss}

\begin{remark}
  Note, incidentally, that in none of the proofs thus far have we had to make direct use of dual representations (via the binary relation `$\sim$' of \Cref{def:chainlie}): additivity in the form of \Cref{eq:rrr} was enough, essentially for the familiar reason that if a semigroup happens to be a group, that group structure is unique.
\end{remark}



\addcontentsline{toc}{section}{References}

\Addresses

\end{document}